\documentclass[12pt]{amsart}
\usepackage{hyperref}
\usepackage{amsfonts}
\usepackage{dsfont}
\usepackage{mathrsfs} 
\usepackage{amssymb, changes}
\usepackage{amsxtra}
\usepackage{amstext} 
\usepackage[english]{babel}
\usepackage{tcolorbox}
\usepackage{xcolor}
\usepackage{cleveref}
\usepackage[dvipsnames]{xcolor}
\usepackage{tikz}
\usetikzlibrary{positioning}
\usepackage{xcolor}
\usepackage[framemethod=TikZ]{mdframed}
\newmdtheoremenv[linecolor=gray!40!white,
backgroundcolor=gray!9!white, linewidth=2pt,
topline=false,
rightline=false,
leftline=false]{theorem}{Theorem}[section]
\newmdtheoremenv[linecolor=gray!40!white,
backgroundcolor=gray!9!white, linewidth=2pt,
topline=false,
rightline=false,
leftline=false]{assumption}[theorem]{Assumption}
\newmdtheoremenv[linecolor=gray!40!white, backgroundcolor=gray!9!white, linewidth=2pt,topline=false,rightline=false,leftline=false]{lemma}[theorem]{Lemma}
\newmdtheoremenv[linecolor=gray!40!white, backgroundcolor=gray!9!white, linewidth=2pt,topline=false,rightline=false,leftline=false]{corollary}[theorem]{Corollary}
\newmdtheoremenv[linecolor=gray!40!white, backgroundcolor=gray!9!white, linewidth=2pt,topline=false,rightline=false,leftline=false]{proposition}[theorem]{Proposition}
\newmdtheoremenv[linecolor=gray!40!white, backgroundcolor=gray!9!white, linewidth=2pt,topline=false,rightline=false,leftline=false]{definition}{Definition}

\newtheorem{remark}[theorem]{Remark}

\newcommand{\E}{\mathbb{E}}
\renewcommand{\P}{\mathbb{P}}

\def\Tcal{\mathcal T}

\def\Pcal{{\mathcal P}}
\newcommand{\X}{\mathbf{X }}

\newcommand{\Z}{\mathbb{Z}}
\newcommand{\N}{\mathbb{N}}
\def\eps{\varepsilon}
\newcommand{\rmd}{\,\mathrm{d}}

\def\1{\mathds{1}}
\def\1vec{\mathbf{1}}
\def\0{\mathbf{0}}

\definecolor{VioletRed}{RGB}{0,128,0} 

\begin{document}
\title{Once-excited random walks on general trees}
\author[]{Duy-Bao Le}
\address[D.-B. L.]{Faculty of Mathematics and Applications, Saigon University, 273 An Duong Vuong, Cho Quan, Ho Chi Minh City, Vietnam}
\email{duybaole2004@gmail.com}


\author[]{Tuan-Minh Nguyen}
\address[T.-M. N.]{School of Mathematical Sciences, Monash University, 9 Rainforest Walk, Clayton 3800, Victoria, Australia}
\email{tuanminh.nguyen@monash.edu}
\begin{abstract}
We study once-excited random walks on general trees, modeled by placing a single “cookie” at each vertex. Each cookie acts as a metaphorical reward that is consumed upon the first visit to the vertex where it is placed. At that initial visit, the walk is in an excited mode and randomly performs a nearest-neighbor step according to a specified bias parameter. For all subsequent visits to this vertex, the walk reverts to a non-excited mode and behaves as a symmetric random walk. We consider a random environment on the tree in which the bias parameters are independent random variables. We prove that the process in this random environment exhibits a sharp phase transition between transience and recurrence, where the critical threshold is determined by the branching-ruin number of the tree.
\end{abstract}


\keywords{branching-ruin number, phase transition, excited random walks, random walks on trees, polynomial trees} 
\subjclass{60K35, 60K37, 82D30}
\maketitle
\tableofcontents

\section{Introduction}
Excited random walks, also known as cookie random walks, model random movement with long memory, where the probability of moving from a vertex depends on how many times that vertex has been visited previously. This is conceptualized by placing a number of cookies at each vertex, with one cookie consumed each time the vertex is visited. As long as cookies remain at a vertex, the walk operates in an ``excited" mode, resembling a biased random walk. Once all cookies at a vertex are consumed, the walk switches to an unbiased (symmetric) random walk. These processes are non-Markovian, as their transition rules depend on the walk’s local time at the current vertex.

We now formally describe a generalized model where we place one cookie at each vertex. Let $\Tcal=(V,E)$ be a locally finite infinite tree with root $\rho$. Let $\vec{E}=\{(u,v) : \{u,v\}\in E\}$ be the set of all induced oriented edges. For two
adjacent vertices $u$ and $v$, we write $u\sim v$. For a vertex $v\neq\rho$, we denote its parent by $v^{-1}$. Fix two collections of positive real numbers $\boldsymbol{\lambda}=(\lambda_v)_{v\in V}$ and $\boldsymbol{\mu}=(\mu_v)_{v\in V}$ where we set $\lambda_{\rho}=\mu_{\rho}=1$. Let $p_{\boldsymbol{\lambda},\boldsymbol{\mu}}: \vec{E}\times \N\to (0,1)$ be the function defined as follows:
\begin{itemize}
    \item For each $u\sim \rho$ and $n\ge 1$,  $p_{\boldsymbol{\lambda},\boldsymbol{\mu}}(\rho,u,n)=1/\deg(\rho)$;  
    \item For $v\in V\setminus\{\rho\}$, $$p_{\boldsymbol{\lambda},\boldsymbol{\mu}}(v,v^{-1},1)=\frac{\lambda_v}{\lambda_v+\deg(v)-1} \text{ and } p_{\boldsymbol{\lambda},\boldsymbol{\mu}}(v,u,1)=\frac{1}{\lambda_v+\deg(v)-1},$$  for each child $u$ of $v$;
     \item For $n\ge 2$ and $v\in V\setminus\{\rho\}$, $$p_{\boldsymbol{\lambda},\boldsymbol{\mu}}(v,v^{-1},n)=\frac{\mu_v}{\mu_v+\deg(v)-1} \text{ and } p_{\boldsymbol{\lambda},\boldsymbol{\mu}}(v,u,n)=\frac{1}{\mu_v+\deg(v)-1},$$  for each child $u$ of $v$.
\end{itemize}
We consider a non-Markovian random walk $\X=(X_n)_{n\ge0}$ on $\Tcal$ with $X_0=\rho$, whose conditional transition law is given by
\begin{align}\label{P.quenched}
    \P_{\boldsymbol{\lambda},\boldsymbol{\mu}}(X_{n+1}=y|X_n, X_{n-1},\cdots, X_0)=\mathbf{1}_{\{y\sim X_n\}}p_{\boldsymbol{\lambda},\boldsymbol{\mu}}(X_n,y, Z_n(X_n))
\end{align}
where $$Z_n(x):=\sum_{k=0}^{n} \mathbf{1}_{\{X_k=x\}}$$ is the number of visits to $x$ up to time $n$. In other words, during the first visit to a vertex $v$ (i.e., the excited regime), the walk performs a step with \textbf{bias} (toward its parent vertex)  $\lambda_v=\frac{p(v,v^{-1},1)}{p(v,u,1)}$. For subsequent visits to this vertex (i.e., the non-excited regime), it switches to  bias $\mu_v=\frac{p(v,v^{-1},n)}{p(v,u,n)}$ with $n\ge 2$ .

When  $\mu_v=1$ for all $v\in V$, i.e., no bias in the non-excited regime, the process is known in the literature as a \textbf{once-excited random walk} (OERW), denoted by ${\rm OERW}(\boldsymbol{\lambda})$. For general values of $\boldsymbol{\lambda}$ and $\boldsymbol{\mu}$,  we call this process a \textbf{generalized once-excited random walk} (GOERW), and denote it by ${\rm GOERW}(\boldsymbol{\lambda}, \boldsymbol{\mu})$. 

Assume that the bias parameters $\boldsymbol{\lambda}=(\lambda_v)_{v\in V}$ and $\boldsymbol{\mu}=(\mu_v)_{v\in V}$ are collections of random variables governed by a probability $\mathbf P$. These random variables induce a random environment on tree $\Tcal$. Let $\mathbf{E}$ be the expectation associated with $\mathbf{P}$. For each realization of $\boldsymbol{\lambda}$ and $\boldsymbol{\mu}$, the probabilistic measure $\mathbb{P}_{\boldsymbol{\lambda},\boldsymbol{\mu}}$ given by \eqref{P.quenched} is called the \textbf{quenched law} of $\X$. The probability measure
$$\P(\cdot):=\mathbf{E}[\P_{\boldsymbol{\lambda}, \boldsymbol{\mu}}(\cdot)]
$$
is called the \textbf{annealed law} of $\X$. We say that $\X$ \begin{itemize}
    \item is \textbf{recurrent} if the process visits each vertex in $\Tcal$ infinitely often,
    \item is \textbf{transient} if the process visits each vertex in $\Tcal$ finitely often.
\end{itemize}

The concept of once-excited random walks was first introduced by Benjamini and Wilson \cite{BW2003} and later extended to multi-excited random walks by Zerner \cite{Z2005}. In recent years, significant attention has been devoted to the study of one-dimensional nearest-neighbor excited random walks, leading to remarkable phase transition results for the asymptotic behavior of the model, including criteria for recurrence/transience \cite{ABO, Z2005} and non-ballisticity/ballisticity \cite{BasS1, BasS2}, as well as the characterization of the limit distribution in specific regimes — see \cite{DK2012, KM2011, KZ2013, P2012, KP2017, KMP2022} for more details. One-dimensional excited random walks with non-nearest-neighbor steps were studied recently in \cite{DP2017, CHN2021, N2022}. Excited random walks on regular trees were considered in \cite{V2003} and \cite{BS2009}.
See also \cite{BW2003, Z2006, ABK2008, Hofstad2010, Menshikov2012, AHR2023} for results on excited random walks on $\Z^d$. For a literature review, we refer the reader to \cite{KZ2013}. Note that the behavior of OERW depends on its vertex range (i.e., the set of vertices visited up to the current time). This makes it closely related to the once-reinforced random walk (ORRW) introduced by Davis \cite{D1990}, whose behavior instead depends on the (unoriented) edge range. The ORRW has been studied extensively on trees \cite{DKL2002, C2006, KS2018, CHK2018, CKS2020}, on $\Z^d$ \cite{CT2025, EK2006}, and on non-amenable graphs \cite{CT2025}.

The OERW model on general trees has been considered in \cite{H2018}. The author in \cite{H2018} assumed that the bias parameters in the excited regime $\lambda_v=\gamma$ are the same for all $v\in V\setminus\{\rho\}$. However, under this homogeneous assumption, the walk does not exhibit a sharp phase transition between transience and recurrence with respect to parameter $\gamma$, even on a spherically symmetric tree (see Theorem 1 and Lemma 7 in \cite{H2018}). 
 In our present paper, we generalize the result in \cite{H2018} to the case when the parameters $(\lambda_v)_{v\in V}$ and $(\mu_v)_{v\in V}$  could take values locally depending on $v$ (see Theorem \ref{RT.thm} below).
 Furthermore, under a modified assumption for $(\lambda_v)$, we show that $\text{OERW}(\boldsymbol{\lambda})$ has a sharp phase transition on polynomially growing trees (see Theorem \ref{main.thm} below).

For each edge $e=\{v^{-1},v\}\in E$, let $|e|=|v|$ be the distance from $v$ to $\rho$, i.e., the number of edges in the shortest path connecting $v$ and $\rho$. A \textbf{cutset} in $\mathcal{T}$ is a minimal set $\pi$ of edges that separates the root from infinity. That is, for any infinite self-avoiding path $(v_i)_{i\ge 0}$ with $v_0=\rho$, there exists a unique $i$ such that $\{v_i, v_{i+1}\} \in \pi$.
Let $\Pi$ be the set of all cutsets in $\mathcal{T}$.
To measure polynomial growth, we use the \textbf{branching-ruin number}, introduced in \cite{CKS2020}:
\begin{equation}\label{brr2}
\text{br}_r(\mathcal{T}) = \sup\left\{\gamma > 0: \inf_{\pi\in \Pi}\sum_{e\in\pi}|e|^{-\gamma} > 0\right\}.
\end{equation}
Let $\partial \mathcal{T}$ stand for the \textbf{boundary} of $\mathcal{T}$, which is the set of all infinite self-avoiding paths starting from the root. We define the metric between any two infinite paths $\xi, \eta \in \partial \mathcal{T}$ whose last common edge is $e$ by
\begin{equation}\label{def.d}
    d(\xi, \eta) = 1/|e|.
\end{equation}
Note that ${\rm br}_r(\mathcal{T})$ is equal to the \textbf{Hausdorff dimension} of $\partial \mathcal{T}$ with respect to the metric $d$ (see Section 3.3 in \cite{CKS2020}).

In this paper, we establish an annealed phase transition between recurrence and transience for the once-excited random walk in random environment on the tree $\mathcal{T}$ with respect to the branching-ruin number $\text{br}_r(\mathcal{T})$. \vspace{5pt}

\begin{theorem}\label{main.thm} 
    Assume that 
    $$\lambda_v=1+\alpha_v \deg(v), \text{ and } \mu_{v}=1 \text{ for all } v\in V\setminus\{\rho\},$$
    where $(\alpha_v)_{v\in V\setminus\{\rho\}}$ are i.i.d. non-negative random variables with $m:=\mathbf{E}[1/(\alpha_v+1)]$.
    Then, under the annealed law, the process ${\rm OERW}(\boldsymbol{\lambda})$ is a.s. transient when ${\rm br}_r(\Tcal)> 2-m$ and it is a.s. recurrent when ${\rm br}_r(\Tcal)< 2-m$. 
\end{theorem}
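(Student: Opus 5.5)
The natural strategy is the one developed for once-reinforced random walks on trees in \cite{CKS2020} and adapted to OERW in \cite{H2018}: reduce the question to a quasi-independent percolation on $\Tcal$ and compare it with the branching-ruin number. For each vertex $v\neq\rho$, consider the walk started at $v^{-1}$ that has just crossed the edge $\{v^{-1},v\}$ for the first time. Look at the \emph{extension} of the walk restricted to the ray from $\rho$ through $v$. Declare the edge $e=\{v^{-1},v\}$ open if the walk, after first reaching $v^{-1}$, reaches $v$ before returning to $\rho$. Equivalently, use the construction of \cite{CKS2020} with independent restricted walks on each edge (Rubin / extension construction). Transience is then equivalent to the event that the open cluster of $\rho$ is infinite, i.e.\ to percolation.

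The key computation is the one-dimensional one. Fix a vertex $v$ at depth $n$ on a ray and let $u=v^{-1}$. On its first visit to $u$ the walk steps to the parent with probability $\lambda_u/(\lambda_u+\deg(u)-1)$. Restricting to the two relevant directions (parent versus the child $v$) gives the probability $1/(1+\lambda_u)=1/(2+\alpha_u\deg(u))$ of going toward $v$. This is why $\deg(u)$ appears in $\lambda_u$: it compensates for the other children, so the ray-restricted dynamics become a one-dimensional excited walk with bias independent of the degree. On later visits the walk is symmetric, so from $u$ the probability to hit $v$ before $\rho$ is $(n-1)/n$ by gambler's ruin. Conditioning on the first step and averaging over $\alpha_u$ with the degree effect removed should give
\[
\P(e\ \text{open}\mid e^{-1}\ \text{open}) = 1-\frac{2-m}{n}+O(n^{-2}).
\]
Taking the product along the ray then gives
\[
\P(\rho\leftrightarrow e)\asymp |e|^{-(2-m)}.
\]
The hardest step is to get exactly the constant $2-m$ here: one has to track precisely the first-step excitation at $u$ versus the subsequent symmetric returns. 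I would write the hitting probability as an explicit rational function of $\alpha_u$ and $n$, expand it to first order in $1/n$, and take the $\mathbf E$-expectation, which is where $\mathbf E[1/(\alpha+1)]=m$ should emerge.

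Next I would verify quasi-independence. For two edges $e_1,e_2$ whose paths to the root share their last common edge $e$, the openness events are independent given that $e$ is open. This is because the environments $\alpha_v$ are i.i.d.\ and the restricted walks on disjoint branches are independent in the extension construction. It gives
\[
\P(\rho\leftrightarrow e_1,\ \rho\leftrightarrow e_2)\le C\,\frac{\P(\rho\leftrightarrow e_1)\,\P(\rho\leftrightarrow e_2)}{\P(\rho\leftrightarrow e)},
\]
with $\P(\rho\leftrightarrow e)\asymp |e|^{-(2-m)}$.

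Then I would apply the standard percolation criteria, namely Lyons' theorem for quasi-independent percolation, in the form used in \cite{CKS2020}.
\begin{itemize}
\item If ${\rm br}_r(\Tcal)<2-m$, pick $\gamma$ with ${\rm br}_r<\gamma<2-m$. There are then cutsets with $\sum_{e\in\pi}|e|^{-\gamma}$ arbitrarily small. A union bound over such a cutset shows that the open cluster is a.s.\ finite, so the walk returns to $\rho$ a.s.; a renewal argument using the fresh cookies beyond the explored region then upgrades this to recurrence.
\item If ${\rm br}_r(\Tcal)>2-m$, a positive capacity with respect to the kernel $|e|^{2-m}$ on $\partial\Tcal$, via Frostman and the Hausdorff-dimension interpretation of ${\rm br}_r$, gives percolation with positive probability by the second-moment method. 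A 0--1 argument over subtrees then yields transience a.s.
\end{itemize}
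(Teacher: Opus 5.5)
\textbf{Main gap: quasi-independence.} The argument breaks at your quasi-independence step. The events $\{e_1\text{ open}\}$ and $\{e_2\text{ open}\}$ are \emph{not} independent given $\{e\text{ open}\}$, in either the quenched or the annealed setting. The same misconception sits behind your phrase ``independent restricted walks on each edge''. The extensions $\X^{(e_1^+)}$ and $\X^{(e_2^+)}$ are independent only when $e_1^+\wedge e_2^+=\rho$. Otherwise they are driven by the same clocks on $\mathcal{P}_{e^+}$, and these shared clocks are still used \emph{after} $e^+$ has been reached:
\begin{itemize}
\item the first-visit clocks at $e^+$ decide whether the first move from $e^+$ goes to $e^-$, $v_1$, $v_2$ or elsewhere;
\item the non-excited clocks on $(e^+,e^-)$ govern every return toward $\rho$.
\end{itemize}
Concretely, once the walk is back at $e^+$, the edge $e_j$ is open iff $G(e)>G^*(e_j)$. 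Here $G(e)$ is the total time on the $(e^+,e^-)$ clock before a return to $\rho$, an exponential with rate $p_e=1/\varphi(e^+)$ that is the \emph{same} for $j=1,2$. $G^*(e_j)$ is the time needed on the $(e^+,v_j)$ clock to reach $e_j^+$. The joint probability therefore involves $\mathbf{E}\bigl[e^{-p_e\max\{G^*(e_1),G^*(e_2)\}}\bigr]\ge \mathbf{E}\bigl[e^{-p_eG^*(e_1)}\bigr]\mathbf{E}\bigl[e^{-p_eG^*(e_2)}\bigr]$. So the branches are positively correlated, and quasi-independence is exactly the claim that this correlation costs only a bounded factor.

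This is the paper's main technical lemma. It uses $e^{-p\max\{x_1,x_2\}}\le e^{-px_1/2}e^{-px_2/2}$ and reads the right side as one-branch probabilities for a walk whose resistances on $\mathcal{P}_e$ are doubled. It then shows, via the explicit formula for $\psi$ and the telescoping sum $\sum\bigl(1/\varphi(u^{-2})-1/\varphi(u)\bigr)$, that this changes conditional connection probabilities by at most $e^{K}$, where $K=2+\sup_{|v|\ge2}R(v^{-1},v)/\varphi(v^{-1})$ (here $K=3$). Without something of this kind, your transience half (second moment, Lyons' criterion, Frostman) is unsupported. The recurrence half uses only first moments and a union bound over cutsets, so it survives.

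\textbf{Second error: the one-step computation.} The one-step computation is wrong as written, even though the target $1-(2-m)/n$ is correct. Restricting to ``parent versus $v$'' gives $1/(1+\lambda_u)=1/(2+\alpha_u\deg(u))$. This depends on $\deg(u)$, contradicting your own claim, and its mean is unrelated to $m$. The other $\deg(u)-2$ children cannot be discarded. If the first step enters one of them, the cookie at $u$ is gone when the walk comes back, so the next move along the ray is symmetric. The effective probability of stepping toward $v$ is
\[
\frac{1}{\lambda_u+\deg(u)-1}+\frac{\deg(u)-2}{2(\lambda_u+\deg(u)-1)}=\frac{1}{2(1+\alpha_u)} .
\]
Combined with the gambler's-ruin value $(n-2)/n$ after a step to the parent, this gives exactly $\psi=1-\frac1n\cdot\frac{1+2\alpha_u}{1+\alpha_u}$, whose mean is $1-(2-m)/n$.

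\textbf{What your route offers once repaired.} With these two repairs, your annealed route is a genuine alternative to the paper's quenched one. The factors of $\Psi(e)$ along a ray depend on distinct $\alpha$'s, so $\mathbf{E}\Psi(e)$ is exactly a product of means, $\asymp|e|^{-(2-m)}$. Recurrence then becomes a one-line union bound, with no Hoeffding or Borel--Cantelli over cutsets. Transience follows from Lyons' theorem for the annealed percolation, provided you derive annealed quasi-independence from a quenched bound whose constant is uniform in the environment; the paper's lemma gives this when $\mu\equiv1$. The two branches then share only $\alpha_{x\wedge y}$, which costs a factor depending only on $m$.
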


We also prove a quenched phase transition for the generalized once-excited random walk. For each edge $e=\{v^{-1},v\}\in E$, denote by $\mathcal{P}_e$ or $\mathcal{P}_v$ the unique shortest path of edges connecting $v$ to $\rho$. For two edges $e_1=\{v_1^{-1},v_1\}$ and $e_2=\{v_2^{-1},v_2\}$, we write $e_1 \le e_2$ or $e_1\le v_2$ if $e_1\in \mathcal{P}_{e_2}$. We also write $e_1 < e_2$ or $e_1< v_2$ if $e_1\le e_2$ and $e_1\neq e_2$. Recall that for $v\in V\setminus\{\rho\}$, $v^{-1}$ is the parent vertex of $v$. For a vertex $v$ with $|v|\ge 2$, let $v^{-2}:= (v^{-1})^{-1}$ be the grandparent vertex of $v$.
For each edge $e\in E$, define 
\begin{align}\label{def.Psi}
\Psi(e)&=\prod_{g\le e} \psi(g),
\end{align}
in which
        \begin{align}\label{def.psi}
           \psi(u^{-1}, u) :=\begin{cases}
               1 & \text{ for } u\sim \rho, \\
                \displaystyle 1 - \left(1 - \frac{\varphi(u^{-2})}{\varphi(u)}\right)\cdot\frac{ \lambda_{u^{-1}} + (\deg(u^{-1})-2) \frac{\mu_{u^{-1}}}{\mu_{u^{-1}}+1}  }{ \lambda_{u^{-1}} + \deg(u^{-1}) - 1 } & \text { for } |u|\ge 2,
           \end{cases}  
\end{align}
with $\varphi(\rho)=0$ and, for each $x\in V$,  \begin{align}\label{def.phi}
    \varphi(x) := \sum_{e\in \mathcal{P}_x} R(e) \quad \text{with}\quad R(e):=\prod_{\{z^{-1}, z\} \in \mathcal{P}_{e} } \mu_{z^{-1}} \text{ for each $e\in E$.}
\end{align}
Let \[
RT(\Tcal, \boldsymbol{\lambda},\boldsymbol{\mu}) := \sup \left\{ \gamma > 0 : \inf_{\pi \in \Pi} \sum_{e \in \pi} (\Psi(e))^{\gamma} > 0 \right\}.
\]
 It is worth noting that $RT(\mathcal{T}, \boldsymbol{\lambda},\boldsymbol{\mu})$ is equal to the $\Psi$-Hausdorff dimension (i.e., the Hausdorff dimension with gauge function $\Psi$) of the boundary $\partial \mathcal{T}$ with respect to the metric defined in \eqref{def.d} (see Section 3.3 in \cite{CKS2020}). 
 
\begin{theorem}\label{RT.thm}
    Assume that $$
    \sup_{v\in V , |v|\ge 2}\frac{R(v^{-1},v)}{\varphi(v^{-1})}<\infty.$$
    Then,  under the quenched law $\mathbb{P}_{\boldsymbol{\lambda},\boldsymbol{\mu}}$, the ${\rm GOERW}(\boldsymbol{\lambda},\boldsymbol{\mu})$ is a.s. recurrent when $RT(\Tcal, \boldsymbol{\lambda},\boldsymbol{\mu}) < 1$, and a.s. transient when $RT(\Tcal, \boldsymbol{\lambda},\boldsymbol{\mu}) > 1$.
\end{theorem}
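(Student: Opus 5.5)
We follow the strategy of Collevecchio–Kious–Sidoravicius for once-reinforced walks: reduce the non-Markovian GOERW to a quasi-independent bond percolation on $\Tcal$, and use the known phase transition for quasi-independent percolation in terms of the $\Psi$-Hausdorff dimension of $\partial\Tcal$.

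\textbf{Extensions and percolation.} We first use the standard extension construction (Rubin/Kious–Sidoravicius). For each vertex $v$, the walk restricted to the star of $v$ is driven by independent collections of clocks or arrows. Realise the GOERW from these i.i.d. local randomness and define, for each edge $e=(u^{-1},u)$ with $|u|\ge2$, the event that the \emph{extension} of the walk confined to $[\rho,u]$, started at $u^{-1}$ right after its first arrival there, hits $u$ before returning to $\rho$. Declaring $e$ open on this event gives a bond percolation on $\Tcal$. By the standard argument, transience is equivalent to the existence, with positive probability, of an infinite open path from $\rho$. Recurrence follows when a.s. no infinite cluster exists; a $0$–$1$ argument over restarts then upgrades this to visiting every vertex infinitely often.

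\textbf{Computing $\psi$.} The key computation is that $\P(e \text{ open}\mid \text{edges}<e\text{ open})=\psi(e)$. Once the walk reaches $u^{-1}$ for the first time, its first step is excited. With probability $1/(\lambda_{u^{-1}}+\deg-1)$ it goes to $u$ directly. With probability $\lambda_{u^{-1}}/(\lambda_{u^{-1}}+\deg-1)$ it goes to $u^{-2}$, and from then on the path $[\rho,u]$ is already visited except $u$, so the walk is a Markov chain with conductances determined by the $\mu$'s. The probability of returning to $u^{-1}$ before $\rho$ and then escaping to $u$ is governed by the resistances $\varphi(u^{-2})$ and $\varphi(u)$: the escape probability from $u^{-2}$ toward $u$ before $\rho$ equals $\varphi(u^{-2})/\varphi(u)$. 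Steps into the other $\deg(u^{-1})-2$ children reenter $u^{-1}$, and the factor $\mu/(\mu+1)$ accounts for subsequent non-excited choices between the parent and $u$. Combining these cases yields
\[
1-\Bigl(1-\frac{\varphi(u^{-2})}{\varphi(u)}\Bigr)\cdot\frac{\lambda_{u^{-1}}+(\deg(u^{-1})-2)\frac{\mu_{u^{-1}}}{\mu_{u^{-1}}+1}}{\lambda_{u^{-1}}+\deg(u^{-1})-1}.
\]
This computation must be done carefully. Consequently $\P(\rho\leftrightarrow e)=\Psi(e)$, since the construction makes the events along a ray a Markov-type product.

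\textbf{Quasi-independence and conclusion.} We then show the percolation is quasi-independent: there is $C<\infty$ such that for edges $e_1,e_2$ with last common edge $e_1\wedge e_2$,
\[
\P(\rho\leftrightarrow e_1,\ \rho\leftrightarrow e_2\mid \rho\leftrightarrow e_1\wedge e_2)\le C\,\P(\rho\leftrightarrow e_1\mid \rho\leftrightarrow e_1\wedge e_2)\,\P(\rho\leftrightarrow e_2\mid\rho\leftrightarrow e_1\wedge e_2).
\]
The difficulty is that after the branch point the extensions on the two branches share the path to $\rho$, and the first visit to the branching vertex correlates them. We handle this by conditioning on the local arrows at the branch vertex $w$. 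Only the edges adjacent to $w$ are genuinely dependent; further down, the events are functions of disjoint local randomness together with the shared path, whose effect enters only through ratios like $\varphi(\cdot)$. The hypothesis $\sup R(v^{-1},v)/\varphi(v^{-1})<\infty$ gives a uniform lower bound on $\psi$ at a single edge, or at least bounds the one-edge discrepancy by a constant. This step is the main obstacle.

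Given quasi-independence, we invoke Lyons' theorem (in the form of Theorem/Section 3 of \cite{CKS2020}): if $RT>1$ then $\inf_\pi\sum_{e\in\pi}\Psi(e)>0$, and more precisely there is a finite-energy measure for the kernel $\Psi^{-1}$, so the infinite cluster exists with positive probability, giving transience. If $RT<1$, there are cutsets with $\sum_{e\in\pi}\Psi(e)\to0$, and a union bound gives no infinite cluster, hence recurrence. Finally, transience with positive probability is upgraded to a.s. transience by the usual restart/tail argument of \cite{CKS2020}.
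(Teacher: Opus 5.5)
Your architecture is the paper's: extensions on $\mathcal{P}_v$ coinciding with the restriction of $\X$, the ruin percolation, the case analysis giving $\psi$, Lyons' theorem in both directions, and the $0$--$1$ law. Your computation of $\psi$ also matches the paper's.

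The genuine gap is quasi-independence, and the mechanism you sketch does not work as stated. It is not true that only the edges adjacent to the branch vertex are genuinely dependent. Let $e$ be the last common edge and $v_j$ the child of $e^+$ toward $e_j$. Each time $\X^{(e_j^+)}$ is at $e^+$ in the non-excited regime, it races the clocks $\xi(e^+,v_j,\cdot)$ against $\xi(e^+,e^-,\cdot)$. The latter, together with the clocks on $\mathcal{P}_{e^-}$ that decide whether an excursion from $e^-$ ends at $\rho$, are the same for both extensions. The paper makes this exact:
\begin{itemize}
\item from the first return to $e^+$ on, $e_j$ is open iff $G(e)>G^*(e_j)$;
\item $G(e)$, the clock time spent on $(e^+,e^-)$ before a ruin excursion, is a single exponential variable of rate $p_e=1/\varphi(e^+)$ shared by both branches;
\item $G^*(e_j)$, the clock time on $(e^+,v_j)$ needed to reach $e_j^+$, has density $f_j$, and the two $G^*(e_j)$ are independent of each other and of $G(e)$.
\end{itemize}
So the two events stay coupled through $G(e)$ however deep $e_1,e_2$ lie, and they are positively correlated. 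If both $G^*(e_j)$ were concentrated near $x$, the ratio $\E[e^{-p_e\max(G^*(e_1),G^*(e_2))}]/(\E[e^{-p_eG^*(e_1)}]\,\E[e^{-p_eG^*(e_2)}])$ would be about $e^{p_ex}$. Conditioning on the first arrows at $e^+$, plus a one-edge lower bound on $\psi$, only controls the bounded branch-point factors (the paper's $q_e\ge 1/(1+K)$ and $s_e\le 1$), not this correlation.

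The missing idea is to decouple the shared exponential, in three moves:
\begin{itemize}
\item use $e^{-p_e\max(x_1,x_2)}\le e^{-p_ex_1/2}e^{-p_ex_2/2}$;
\item recognise $\int_0^\infty e^{-p_ex/2}f_j(x)\,dx$ as the conditional probability that $e_j$ is open in a modified environment $\widetilde{\boldsymbol\mu}$ in which every resistance on $\mathcal{P}_e$ is doubled, so that $\widetilde\varphi(u)=\varphi(e^+)+\varphi(u)$ for $u$ below $e^+$;
\item show that this change alters $\prod_{g\in\mathcal{P}_{e_j}\setminus\mathcal{P}_e}\psi(g)$ by at most a factor $e^K$, uniformly in $e_j$.
\end{itemize}
Via $1+x\le e^x$, the last step reduces to the telescoping bound $\varphi(e^+)\sum_{u}\bigl(1/\varphi(u^{-2})-1/\varphi(u)\bigr)\le 2+R(e)/\varphi(e^-)$ along the whole branch. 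It is this cumulative estimate, not a single-edge one, that requires $\sup R(v^{-1},v)/\varphi(v^{-1})<\infty$.

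A smaller point on transience: you should say where $RT>1$ is used, since $\inf_\pi\sum_{e\in\pi}\Psi(e)>0$ alone is not enough. Take $\gamma\in(1,RT)$ and a flow $\theta\le\Psi^\gamma$. Then use the uniform bound on $\sum_{e\in\mathcal{P}_v}(1-\psi(e))\Psi(e)^{\gamma-1}$ to get finite energy with respect to the adapted conductances $c(e)=\Psi(e)/(1-\psi(e))$.
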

\vspace{5pt}

The remainder of the paper is organized as follows. In Section \ref{sec:constr}, we define a strong construction for the GOERW $\X$ on $\Tcal$ and introduce coupled processes $\X^{(v)}$ on $\mathcal{P}_v$, for $v\in V$, such that $\X^{(v)}$ coincides with the restriction of $\X$ to $\mathcal{P}_v$ until the last exit time from $\mathcal{P}_v$. Using this construction, we build in Section \ref{sec:quasi} a quasi-independent percolation model in which an edge $\{v^{-1},v\}$ is open if and only if the coupled process $\X^{(v)}$ hits $v$ before returning to $\rho$. We show that an edge $e$ is connected to the root by an open path with probability exactly equal to $\Psi(e)$. Using this fact, we prove that the process is transient when the percolation is supercritical and recurrent when it is subcritical. This connection to percolation allows us to establish criteria for transience and recurrence of the GOERW corresponding to the gauge function $\Psi$, as stated in Theorem \ref{RT.thm} (see the proof in Section \ref{sec:RT}). In the case of the OERW in random environment, we first show that $\Psi(e)$ concentrates around $|e|^{-2+m\pm\eps}$ with high probability. Using the Borel-Cantelli lemma, we deduce that there is a sequence of cutsets such that $\Psi(e)\le |e|^{-2+m+\eps}$ a.s. for each edge $e$ belonging to these cutsets.  This fact and Theorem \ref{RT.thm} yield the recurrence case of Theorem \ref{main.thm}. To prove the transience case of Theorem \ref{main.thm}, we use the quasi-independent percolation technique to show that, with positive probability, there is an infinite subtree $\widetilde{\Tcal}$ containing the root such that $\Psi(e)\ge \kappa^{-1} |e|^{-2+m-\eps}$, with $\kappa>0$, for each edge $e$ of $\widetilde{\Tcal}$ and $\text{br}_{r}(\widetilde{\Tcal})>\text{br}_{r}(\Tcal)-2\eps$. Combining this fact and Theorem \ref{RT.thm}, we deduce the transience case. We present the full proof of Theorem \ref{main.thm} in Section \ref{sec:main}.

Similarly to \cite{H2018}, our approach employs the ruin-probability technique introduced in \cite{CKS2020} for ORRW. For OERW, however, the proofs involve additional technical considerations compared with those in \cite{CKS2020}.
The unpublished paper \cite{H2018}, unfortunately, does not address the sharp phase transition for OERW, and certain steps in the proofs appear to contain gaps and inaccuracies (see Remark \ref{rem1} and Remark \ref{rem2}). Our results, however, are more general, extending to non-homogeneous bias parameters and random environments on any locally finite infinite tree.
The results in this paper thus complete the phase transition picture for once-excited random walks on trees and serve as the counterparts to the results for once-reinforced random walks established in \cite{CKS2020}.

\section{Strong construction of GOERW}\label{sec:constr}

   Let $\boldsymbol{\xi} = \big( \xi(v, u, j): \ v \in V, u \sim v, j \geq 0\big)$ be a collection of independent exponential random variables with rate 1. In this section, we use $\boldsymbol{\xi}$ to give a strong construction of the process $\mathbf{X}$ satisfying the quenched law $\mathbb{P}_{\boldsymbol{\lambda},\boldsymbol{\mu}}$ given by \eqref{P.quenched} and define coupled processes on each path $\mathcal{P}_v$ for $v\in V\setminus\{\rho\}$. This construction is inspired by Rubin's construction for the generalized Pólya urn (see Section 5 in \cite{D1990}).

    \subsection{Rubin's construction of $\X$}

Recall that $Z_n(v)= \sum_{j=0}^n \mathbf{1}_{\{X_j = v\}}$ is the number of visits to $v$ up to time $n$. Let $$C_n(v,u)=\sum_{j=1}^n \mathbf{1}_{\{X_{j-1}=v, X_j=u\}}$$ be the number of crossings from $v$ to $u$ up to time $n$. Let \begin{align}\label{r.def}
     r_{\boldsymbol{\lambda}}(v,u) & =\mathbf{1}_{\{u = v^{-1}\}} \prod_{\{w^{-1},w\}\in \mathcal{P}_{u}}\lambda_{w}^{-1} +\mathbf{1}_{\{u^{-1}=v\}}\prod_{\{w^{-1},w\}\in \mathcal{P}_{v}}\lambda_{w}^{-1} \quad\text{and}\\
    r_{\boldsymbol{\mu}}(v,u) &=\mathbf{1}_{\{u = v^{-1}\}} \prod_{\{w^{-1},w\}\in \mathcal{P}_{u}}\mu_{w}^{-1} +\mathbf{1}_{\{u^{-1}=v\}}\prod_{\{w^{-1},w\}\in \mathcal{P}_{v}}\mu_{w}^{-1}.
\end{align}
be the \textbf{rates} on the oriented edge $(v,u)$ associated with the excited and non-excited regimes at $v$, respectively.
Note that $r_{\boldsymbol{\lambda}}(v,v^{-1})=\lambda_{v}r_{\boldsymbol{\lambda}}(v,u)$ and $r_{\boldsymbol{\mu}}(v,v^{-1})=\mu_{v}r_{\boldsymbol{\mu}}(v,u)$ for each child $u$ of $v$.

Using the collection of exponential random variables $\boldsymbol{\xi}$, we construct the process $\X$ inductively as follows.

Set $X_0 = \rho$. On the event $\{X_n = v, Z_n(v)=1\}$, the next position is given by
$$X_{n+1}= \arg\min_{u\sim v} \frac{\xi(v,u,0)}{r_{\boldsymbol{\lambda}}(v,u)}.
$$
On this event, the process jumps from $v$ to its parent with probability $$\frac{r_{\boldsymbol{\lambda}}(v,v^{-1})}{\sum_{w\sim v}r_{\boldsymbol{\lambda}}(v,w)}= \frac{\lambda_{v}}{\lambda_v+\deg(v)-1}$$ or to a child $u$  with probability $$\frac{r_{\boldsymbol{\lambda}}(v,u)}{\sum_{w\sim v}r_{\boldsymbol{\lambda}}(v,w)}=\frac{1}{\lambda_v+\deg(v)-1}.$$ 

Let
$\tau_v^+:=\inf\{ n\ge 1 : Z_{n}(v)=2\}$ be the first return time to $v$.
On the event $\{X_n = v, Z_n(v)\ge 2\}$, the next position is given by
$$X_{n+1}= \arg\min_{u\sim v}\left\{ \sum_{k=C_{\tau_v^+}(v,u)}^{{C}_n(v,u)}\frac{\xi(v,u, k+1)}{r_{\boldsymbol{\mu}}(v,u)}\right\}.$$
Note that $$C_{\tau_v^+}(v,u)=\left\{ \begin{matrix}
    1 & \text{if $X_{\tau_v+1}=u$},\\
    0 & \text{otherwise},
\end{matrix}\right.$$
where $\tau_v:=\inf\{n\ge0 : X_n=v\}$ is the first hitting time to $v$.
By the memoryless property, we infer that, on the event $\{X_n = v, Z_n(v)\ge 2\}$, the process jumps from $v$ to its parent with probability ${\mu_{v}}/(\mu_v+\deg(v)-1)$ or to each of its children with probability ${1}/(\mu_v+\deg(v)-1)$.

Hence, the above-constructed process $\X=(X_n)_{n\ge0}$ is a ${\rm GOERW}(\boldsymbol{\lambda},\boldsymbol{\mu})$ with the quenched law $\P_{\boldsymbol{\lambda},\boldsymbol{\mu}}$ given by  \eqref{P.quenched}.  
\subsection{Restrictions and Extensions}

For a subset $B\subset V$, let $$\delta_n(B):=\inf\Big\{ k\ge 0: \sum_{j=0}^k\mathbf{1}_{\{X_j\in B\}}=n\Big\} \quad\text{and}\quad S_{B}:=\sup\{n\ge0 : \delta_n(B)<\infty\},$$
where we adopt the conventions $\inf \emptyset = \infty$ and $\sup\emptyset =-\infty$. 
Define $m_0 = 0$ and $m_{k+1} = \min\{ j > m_k : X_{\delta_j(B)} \neq X_{\delta_{m_k}(B)} \}$ for each $k\ge0$. Let $$K_B:=\sup\{k\ge 0: m_k\le S_{B}\}.$$
We call $(X_{\delta_{m_n}(B)})_{0\le n\le K_B}$ the \textbf{restriction} of the process $\X$ to $B$. This is a nearest-neighbor random walk which describes the movement of the process $\X$ within $B$, ignoring times when the process $\X$ is outside $B$. We call $K_B$ the \textbf{killing time} of the restriction to $B$. 

Fix a vertex $v \in V\setminus\{\rho\}$. Recall that $\mathcal{P}_v$ is the shortest path connecting $\rho$ and $v$. We now construct a process $\mathbf{X}^{(v)} = (X_n^{(v)})_{n \ge 0}$ on $\mathcal{P}_v$, which is coupled with $\X$ such that 
\begin{align}\label{coind}
    {X}^{(v)}_n=X_{\delta_{m_n}(\mathcal{P}_v)}\quad \text{for all $0\le n\le K_{\mathcal{P}_v}$,}
\end{align}
i.e., $\X^{(v)}$ coincides to the restriction of $\X$ to $\mathcal{P}_v$ up to the killing time $K_{\mathcal{P}_v}$.
This process is defined as follows.

Set $X_0^{(v)}=\rho$. For two vertices $u, w\in\mathcal{P}_v$ with $u\sim w$, let $Z^{(v)}_n(u)$, ${C}^{(v)}_n(u,w)$ be respectively the number of visits to vertex $u$ and the number of crossings from $u$ to $w$ by the process $\X^{(v)}$. For $u\in\mathcal{P}_v\setminus\{v\}$, denote by $(u_i)_{1\le i\le \deg(u)-1}$ the children of $u$. There exists a unique $j\in\{1,2,\cdots, \deg(u)-1\}$ such that $u_{j} \in\mathcal{P}_v$.
On the event $\{X_n^{(v)}=u, Z_n^{(v)}(u)=1\}$ with $u\neq v$, the next position is defined by
$$X_{n+1}^{(v)}=
\begin{cases}   u^{-1} & \text{ if } u^{-1}=\arg\min_{w\sim u} \frac{\xi(u,w,0)}{r_{\boldsymbol{\lambda}}(u,w)}, \\
u_j & \text{ if } u_j=\arg\min_{w\sim u} \frac{\xi(u,w,0)}{r_{\boldsymbol{\lambda}}(u,w)},\\   \arg\min_{w\in \{u^{-1},u_{j}\}} \frac{\xi(u,w,1)}{r_{\boldsymbol{\mu}}(u,w)} & \text{ otherwise}. \end{cases}
$$
Let $\tau_u^+(v)=\inf\{n\ge 1: Z_{n}^{(v)}(u)=2\}$ be the first return time of $\X^{(v)}$ to $u$.  On the event $\{X_n^{(v)}=u, Z_n^{(v)}(u)\ge 2\}$ with $u\neq v$, the next position is defined by
$$X_{n+1}^{(v)}= \arg\min_{w\in \{ u^{-1}, u_{j}\}}\left\{ \sum_{k={C}^{(v)}_{\tau_u^+(v)}(u,w)}^{{C}^{(v)}_n(u,w)}\frac{\xi(u,w, k+1)}{r_{\boldsymbol{\mu}}(u,w)}\right\}.$$
On the event $\{X_n^{(v)}=v\}$, set $X_{n+1}^{(v)}=v^{-1}$.

We call
$\mathbf{X}^{(v)} = (X_n^{(v)})_{n \ge 0}$ the \textbf{extension} of $\mathbf X$ on $\mathcal{P}_v$. One can easily verify that the property \eqref{coind} holds almost surely. 

\begin{remark}\label{rem1}\ 
\begin{itemize}
    \item The extension $\X^{(v)}$ is a nearest-neighbor random walk, but it is \underline{not} in general a ${\rm GOERW}$ on $\mathcal{P}_v$.
    \item The constructions of $\X$ and its extensions in Section 7 of \cite{H2018} do not satisfy \eqref{coind}. In fact, those extensions in \cite{H2018} used an extra collection of exponential random variables $(Z(u,v))_{u\in V, v\sim u}$ that does not appear in the construction of $\X$. This extra randomness does not guarantee that the extension on $\mathcal{P}_v$ coincides with the restriction of $\X$ to this path up to the killing time.
    \item In our paper, the property \eqref{coind} is essential for connecting the transience/recurrence of $\X$ to the quasi-independent percolation defined in Section \ref{sec:quasi} (see the proof of Lemma \ref{per2.lem} for details).
\end{itemize}
   
\end{remark}

\section{Quasi-independent percolation}
In this section, we define a quasi-independent percolation model in which an edge $e=\{v^{-1},v\}$ is open if and only if the extension process ${\X}^{(v)}$ reaches $v$ before returning to $\rho$. We prove that the probability that an edge $e$ belongs to the same percolation cluster as the root is exactly $\Psi(e)$, which is defined in \eqref{def.Psi}. This enables us to show that the process is transient when the percolation is supercritical and recurrent when it is subcritical. It is worth mentioning that quasi-independent percolation was first introduced by Lyons \cite{Lyons1989} and has since been applied to the study of once-reinforced random walks \cite{CKS2020}, random walks among random conductances \cite{CHK2019}, once-excited random walks with homogeneous biases \cite{H2018} and random walks in non-homogeneous random environment \cite{CNT2026}. Although computations for the homogeneous case were outlined in \cite{H2018}, some details appear to be incomplete and contain several inaccuracies. The proofs in this section not only complete the results in \cite{H2018} but also extend them to the general non-homogeneous case.

\subsection{Preliminary notations and results}
Let $\Tcal=(V,E)$ be a infinite locally finite tree. For each edge $e\in E$, we assign a Bernoulli random variable $\xi_e$ with parameter $p_e\in [0,1]$. The Bernoulli field $(\xi_e)_{e\in E}$ is not necessarily independent nor identically distributed. We say that $e$ is \textbf{open} if $\xi_e=1$, and \textbf{closed} otherwise. Assume that $(\xi_e)_{e\in E}$ is governed by a probability measure $\mathbf{Q}$. We call $\mathbf Q$ a \textbf{bond percolation} on $\Tcal$. 
After removing all closed edges from $E$, we obtain connected components comprising of open edges, which we call \textbf{clusters}. If two vertices $x$ and $y$ belong to the same cluster, we write $x\leftrightarrow y$. If the cluster containing $x$ has infinitely many vertices, we write $x\leftrightarrow \infty$. For two vertices $x,y$, we denote by $x\wedge y$ their nearest common ancestor.

A bond percolation $\mathbf{P}$ is \textbf{quasi-independent} if there exists a constant $M>0$ where 
$$\mathbf{Q}(\rho\leftrightarrow x, \rho\leftrightarrow y \ |\ \rho\leftrightarrow x\wedge y )\le M\cdot \mathbf{Q}(\rho\leftrightarrow x \ |\ \rho\leftrightarrow x\wedge y )\mathbf{Q}(\rho\leftrightarrow y \ |\ \rho\leftrightarrow x\wedge y )$$
for each $x, y\in V$. 

For each edge $e=\{e^-,e^+\} \in E$ with $|e^+|=|e^-|+1$, let 
\begin{align}\label{adt.c} \text{ $c(e) = 1$ for $|e| = 1$}\quad \text{and}\quad
    c(e) = \frac{\mathbf{Q}(\rho\leftrightarrow e^+)}{\mathbf{Q}(e \text{ is closed} \mid \rho\leftrightarrow e^-)}.
\end{align}
for $|e|>1$. We call $(c(e))_{e\in E}$ the \textbf{adapted conductances} of the percolation $\mathbf{Q}$. We will use the following result:

\begin{proposition}[Theorem 5.19 in \cite{LP2016}] \label{lyons1989}
    Let $\mathbf{Q}$ be a quasi-independent percolation process taking place on an infinite locally finite tree $\Tcal = (V,E)$.
    \begin{itemize}
        \item [(i)] If \(\inf_{\pi \in \Pi} \sum_{e \in \pi} \mathbf{Q}(\rho \leftrightarrow e) = 0 \text{ then } \mathbf{Q}(\rho \leftrightarrow \infty) = 0\);
        \item [(ii)] If there exists a non-zero flow $\theta$ such that $\sum_{e\in E} \frac{\theta(e)^2}{c(e)}<\infty$ then  $\mathbf{Q}(\rho \leftrightarrow \infty) > 0.$
    \end{itemize}
\end{proposition}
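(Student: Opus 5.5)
Part (i) follows from a first-moment (union) bound, and part (ii) from a second-moment argument in which quasi-independence controls the second moment by the energy of $\theta$ with respect to the adapted conductances $c$.

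\emph{Part (i).} Fix a cutset $\pi\in\Pi$.
\begin{itemize}
\item Suppose the open cluster of $\rho$ is infinite. Since $\Tcal$ is locally finite, K\"onig's lemma gives an infinite self-avoiding path from $\rho$ consisting of open edges.
\item This path crosses $\pi$ at some edge $e$, and then $\rho\leftrightarrow e$.
\item Hence $\mathbf Q(\rho\leftrightarrow\infty)\le \sum_{e\in\pi}\mathbf Q(\rho\leftrightarrow e)$.
\item Taking the infimum over $\pi\in\Pi$ gives (i). Quasi-independence is not needed here.
\end{itemize}

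\emph{Part (ii).} Write $Q(x):=\mathbf Q(\rho\leftrightarrow x)$, and identify an edge $e$ with its lower endpoint $e^+$, so that $\theta(x):=\theta(e)$. Fix $n$ and set
$$Y_n:=\sum_{|x|=n,\ \theta(x)\neq0}\theta(x)\,\frac{\mathbf 1_{\{\rho\leftrightarrow x\}}}{Q(x)}.$$
The conductance $c(e)$ must be positive wherever $\theta(e)\ne0$, since otherwise the energy is infinite. I will check that this forces $Q(x)>0$ for such $x$, so $Y_n$ is well defined. Then $\mathbf E Y_n=\sum_{|x|=n}\theta(x)=|\theta|$, the strength of the flow, which is positive.

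For the second moment I write
$$\mathbf E Y_n^2=\sum_{x,y}\theta(x)\theta(y)\,\frac{\mathbf Q(\rho\leftrightarrow x,\rho\leftrightarrow y)}{Q(x)Q(y)}.$$
Multiplying the quasi-independence inequality by $Q(x\wedge y)$ gives $\mathbf Q(\rho\leftrightarrow x,\rho\leftrightarrow y)\le M\,Q(x)Q(y)/Q(x\wedge y)$. Hence
$$\mathbf E Y_n^2\le M\sum_{x,y}\frac{\theta(x)\theta(y)}{Q(x\wedge y)}.$$

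The key step is to telescope $1/Q(z)$ along the path $\mathcal P_z$:
$$\frac1{Q(z)}=\frac1{Q(\rho)}+\sum_{e\le z}\Big(\frac1{Q(e^+)}-\frac1{Q(e^-)}\Big).$$
Here $Q(\rho)=1$. For $|e|>1$ the increment equals
$$\frac{Q(e^-)-Q(e^+)}{Q(e^+)Q(e^-)}=\frac{\mathbf Q(e\text{ closed}\mid \rho\leftrightarrow e^-)}{Q(e^+)}=\frac1{c(e)}.$$

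Substituting and exchanging sums, the pairs $(x,y)$ at level $n$ with $e\le x\wedge y$ are exactly the pairs with both $x,y$ below $e$. By the flow property, $\sum_{x\ge e,\,|x|=n}\theta(x)=\theta(e)$. Therefore
$$\mathbf E Y_n^2\le M\Big(|\theta|^2+\sum_{|e|=1}\theta(e)^2\Big(\tfrac1{Q(e^+)}-1\Big)+\sum_{1<|e|\le n}\frac{\theta(e)^2}{c(e)}\Big).$$
The level-one terms are finitely many and finite, since $Q(e^+)>0$ wherever $\theta(e)\neq0$. So the right-hand side is bounded by a constant $K<\infty$ independent of $n$.

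By Cauchy--Schwarz (Paley--Zygmund),
$$\mathbf Q(\rho\leftrightarrow\text{level }n)\ge\mathbf Q(Y_n>0)\ge \frac{(\mathbf EY_n)^2}{\mathbf EY_n^2}\ge |\theta|^2/K.$$
The events $\{\rho\leftrightarrow\text{level }n\}$ decrease in $n$, and by K\"onig's lemma their intersection is $\{\rho\leftrightarrow\infty\}$. Hence $\mathbf Q(\rho\leftrightarrow\infty)\ge|\theta|^2/K>0$.

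The main obstacle is the telescoping identity, which turns the quasi-independence bound into the energy $\sum\theta^2/c$. Alongside it, I must handle the bookkeeping at level $1$, where the convention $c(e)=1$ does not match the increment, and the possibility that $Q(x)=0$ on part of the tree.
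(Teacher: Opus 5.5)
Your proof is correct and follows the standard argument behind Theorem 5.19 of Lyons--Peres, which the paper cites without proof. For (i) this is the first-moment bound over a cutset. For (ii) it is the weighted second-moment method with $Y_n$, quasi-independence in the form $\mathbf{Q}(\rho\leftrightarrow x,\rho\leftrightarrow y)\le M\,Q(x)Q(y)/Q(x\wedge y)$, and the telescoping identity $1/Q(e^+)-1/Q(e^-)=1/c(e)$, which converts the second moment into the energy.

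You left one check implicit: $Q(e^+)>0$ for a level-one edge with $\theta(e)\neq 0$, where the convention $c(e)=1$ gives no information. This holds because such an edge has a child edge $g$ with $\theta(g)>0$. Finite energy then forces $c(g)>0$, that is $Q(g^+)>0$, and $Q(e^+)\ge Q(g^+)$.
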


\subsection{Ruin percolation}\label{sec:quasi}
Let $\tau_v=\inf\{n\ge0: X_n=v\}$ and $\tau_v^+=\inf\{n>\tau_v: X_n=v\}$ be respectively the first hitting time of vertex $v$ and the first return time to vertex $v$. 
Define
$$
\mathcal{C} ( \rho)=\left\{\{v^{-1},v\} \in E \colon \tau_v  < \tau_{\rho}^+ \right\}. 
$$ 

Let $\tau_u(v)$ and $\tau_{u}^{+}(v)$ be respectively the first hitting times and the return time to vertex $u$ associated with $\X^{(v)}$.
Let 
$$\mathcal{C}_{\rm CP} ( \rho)=\left\{\{v^{-1},v\} \in E \colon \tau_v(v)  < \tau_{\rho}^+(v) \right\}. $$
We say an edge $e \in E$ is \textbf{open} if $e \in{\mathcal{C}}_{\mathrm{CP}} ( \rho)$, and \textbf{closed} otherwise. We define a correlated percolation by removing all closed edges, and we refer to this model as the \textbf{ruin percolation}.

For each edge $\{u^{-1},u\}\in\mathcal{P}_v$, let 
$$\mathcal{A}_{u}(v):=\left\{ \text{$ \X^{(v)} $ reaches $u$ before returning to $\rho$ since the first hitting time to $u^{-1}$}\right\}.$$

We now generalize Lemma 14 in \cite{H2018} to compute the exact value of the ruin probability $\Psi(e)$.

   \begin{lemma}\label{phical}
    For each edge $\{u^{-1},u\}\in \mathcal{P}_v$, we have 
$$\P_{\boldsymbol{\lambda}, \boldsymbol{\mu}}(\mathcal{A}_{u}(v))=\P_{\boldsymbol{\lambda}, \boldsymbol{\mu}}(\mathcal{A}_{u}(u))=\psi(u^{-1},u)$$
which does not depend on $v$. Furthermore, 
   \begin{align}
\P_{\boldsymbol{\lambda}, \boldsymbol{\mu}}(e\in \mathcal{C}_{\rm CP}(\rho))=\Psi(e)&=\prod_{g\le e} \psi(g).
\end{align}
    \end{lemma}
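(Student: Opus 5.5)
The plan is to compute $\P_{\boldsymbol{\lambda},\boldsymbol{\mu}}(\mathcal{A}_u(v))$ explicitly by a one-step decomposition at the first visit to $u^{-1}$, followed by a gambler's-ruin computation on the path. The product formula then follows from the strong Markov property at the successive first hitting times along $\mathcal{P}_e$.

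\emph{Independence of $v$.} Up to the first hitting time of $u$, the extension $\X^{(v)}$ only visits vertices of $\mathcal{P}_{u^{-1}}\subset\mathcal{P}_u\subset\mathcal{P}_v$. At each such vertex $w$, the rule defining $\X^{(v)}$ involves only the distinguished child $w_j\in\mathcal{P}_u$ and the variables $\xi(w,\cdot,\cdot)$. This rule is literally the same for $\X^{(u)}$. Hence the two extensions agree pathwise until $\tau_u$, so $\mathcal{A}_u(v)=\mathcal{A}_u(u)$ almost surely, and it suffices to compute $\P_{\boldsymbol{\lambda},\boldsymbol{\mu}}(\mathcal{A}_u(u))$. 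For $u\sim\rho$, the walk at $\rho$ restricted to the path has only one neighbour, so it moves to $u$ at once and $\psi=1$.

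\emph{Computing $\psi$ for $|u|\ge2$.} At time $\tau_{u^{-1}}(u)$, every vertex of $\mathcal{P}_{u^{-2}}$ has already been visited, and $u^{-1}$ is visited for the first time. There are three cases for the excited step, using the $\xi(u^{-1},\cdot,0)$ clocks, which are still unused.
\begin{itemize}
\item It goes to $u$ with probability $1/(\lambda_{u^{-1}}+\deg(u^{-1})-1)$.
\item It goes to $u^{-2}$ with probability $\lambda_{u^{-1}}/(\lambda_{u^{-1}}+\deg(u^{-1})-1)$.
\item It selects one of the $\deg(u^{-1})-2$ other children. In that case the extension instead uses the clocks $\xi(u^{-1},\cdot,1)$ with $\mu$-rates, and goes to $u^{-2}$ with probability $\mu_{u^{-1}}/(\mu_{u^{-1}}+1)$.
\end{itemize}
The $\xi(\cdot,\cdot,0)$ and $\xi(\cdot,\cdot,1)$ clocks are independent, so the probability of stepping to $u^{-2}$ equals
\[q:=\frac{\lambda_{u^{-1}}+(\deg(u^{-1})-2)\frac{\mu_{u^{-1}}}{\mu_{u^{-1}}+1}}{\lambda_{u^{-1}}+\deg(u^{-1})-1}.\]
After this step, all vertices of $\mathcal{P}_{u^{-1}}$ are in the non-excited regime. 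By memorylessness of the exponential clocks (the index shift by $C_{\tau^+}$ in the construction), $\X^{(u)}$ is then a birth–death Markov chain on $\mathcal{P}_u$, absorbed at $\rho$ and $u$, with ratio $\mu_w$ of parent-rate to child-rate at each $w$. Equivalently, it is a network on the path whose edge resistances satisfy $R(w,w_j)=\mu_w R(w^{-1},w)$, which is exactly $R(e)=\prod \mu_{z^{-1}}$ from \eqref{def.phi}. The standard gambler's-ruin formula for networks gives that, started from $x\in\mathcal{P}_u$, the chain hits $u$ before $\rho$ with probability $\varphi(x)/\varphi(u)$. Hence
\[\P_{\boldsymbol{\lambda},\boldsymbol{\mu}}(\mathcal{A}_u(u))=(1-q)+q\,\frac{\varphi(u^{-2})}{\varphi(u)}=\psi(u^{-1},u).\]

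\emph{Product formula.} The edge $e=\{v^{-1},v\}$ is open iff $\X^{(v)}$ successively reaches each vertex of $\mathcal{P}_v$ before returning to $\rho$, i.e.\ iff $\bigcap_{\{u^{-1},u\}\le e}\mathcal{A}_u(v)$ occurs. Condition on the history up to $\tau_{u^{-1}}(v)$ on the event that the earlier $\mathcal{A}$'s hold. The clocks at $u^{-1}$ and beyond have not been used, and the clocks at the earlier vertices enter only through their memoryless residual parts. So the conditional probability of $\mathcal{A}_u(v)$ is again $\psi(u^{-1},u)$, by the computation above; in that computation, nothing depended on the past except that all vertices of $\mathcal{P}_{u^{-2}}$ are already visited. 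Multiplying along $\mathcal{P}_e$ yields $\Psi(e)$.

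The main obstacle is making this conditioning rigorous. One must check that, given the past, the residual clocks at already-visited vertices are again i.i.d.\ exponentials with the right rates, so that the post-$\tau_{u^{-1}}$ walk really is the Markov chain described. I would handle this by a filtration argument using the strong memoryless property of the exponential variables at the stopping times of the construction.
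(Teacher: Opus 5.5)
Your proposal is correct and follows the paper's proof essentially step for step: the same split of the excited step at $u^{-1}$ (your probability $q$ of stepping to $u^{-2}$ merges the paper's cases $\mathcal{I}_0$ and $\mathcal{I}_2\cap\mathcal{B}_1$), the gambler's-ruin value $\varphi(u^{-2})/\varphi(u)$ for the non-excited walk on $\mathcal{P}_u$, and sequential conditioning along $\mathcal{P}_e$ to obtain $\Psi(e)$, while your pathwise identification of $\X^{(v)}$ and $\X^{(u)}$ before $\tau_u$ makes explicit the $v$-independence that the paper leaves implicit. The memorylessness step you flag as the main obstacle is also assumed without proof in the paper; for it to hold in the branch where the excited clock picks an off-path child, the later sums on the two path edges must run cumulatively from index $1$ (as they do for $\X$ itself), because starting them at $C^{(v)}_{\tau^+}$ would reuse the losing clock of the first comparison, whose conditional law is biased.
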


\begin{proof}
The event $\big\{\{v^{-1}, v\} \in \mathcal{C}_{\rm CP}\big\}=\{\tau_v(v) < \tau_\rho^+(v)\}$ means that the process $\mathbf{X}^{(v)}$ reaches $v$ before returning to $\rho$. This can be written as:
\[
\{\tau_v(v) < \tau_\rho^+(v)\} = \bigcap_{\{u^{-1},u\}\in\mathcal{P}_v} R_u(v)
\]
where 
$R_u(v)=\{\X^{(v)}  \text{ reaches  $u^{-1}$ and then reaches $u$ before returning to $\rho$}\}.$
It is clear that the event $R_u(v)$  conditional on $\bigcap_{\{w^{-1},w\}\in \mathcal{P}_{u^{-1}}}R_{w}(v)$ is equal to the event $\mathcal{A}_{u}(v)$. By the conditional probability formula, we obtain
\[
\mathbb{P}_{\boldsymbol{\lambda}, \boldsymbol{\mu}}(\{v^{-1}, v\} \in \mathcal{C}_{\rm CP})=\mathbb{P}_{\boldsymbol{\lambda}, \boldsymbol{\mu}}(\tau_v(v) < \tau_\rho^+(v)) = \prod_{\{u^{-1},u\}\in \mathcal{P}_v} \P_{\boldsymbol{\lambda}, \boldsymbol{\mu}}(\mathcal{A}_{u}(v)).
\]

We now compute $\P_{\boldsymbol{\lambda}, \boldsymbol{\mu}}(\mathcal{A}_{u}(v))$ for $\{u^{-1},u\}\in\mathcal{P}_v$. When $u\sim \rho$, it is clear that $\P_{\boldsymbol{\lambda}, \boldsymbol{\mu}}(\mathcal{A}_{u}(v))=\P_{\boldsymbol{\lambda}, \boldsymbol{\mu}}(\mathcal{A}_{u}(u))=1$. Assume from now {on} that $|u|\ge 2$. Recall that $\mathcal{A}_{u}(v)$ is the event that the process $\X^{(v)}$ reaches $u$ before returning to $\rho$ since the first hitting time to $u^{-1}$. It is clear that $\X^{(v)}$ has visited all vertices from $\rho$ to $u^{-1}$ by that time.
Let
\begin{align*}
    \mathcal{I}_0:=\Big\{ u^{-2}=\arg\min_{w\sim u^{-1}} \frac{\xi(u^{-1},w,0)}{r_{\boldsymbol{\lambda}}(u^{-1},w)}\Big\},\  
\mathcal{I}_1 := \Big\{u=\arg\min_{w\sim u^{-1}} \frac{\xi(u^{-1},w,0)}{r_{\boldsymbol{\lambda}}(u^{-1},w)}\Big\} \text{  and }
\mathcal{I}_2 := (\mathcal{I}_{0}\cup \mathcal{I}_{1})^c.
\end{align*}
Note that
\begin{equation}\label{prob.Ik}\begin{aligned}
    \P_{\boldsymbol{\lambda}, \boldsymbol{\mu}}(\mathcal{I}_0)&=\frac{\lambda_{u^{-1}}}{\lambda_{u^{-1}}+\deg({u^{-1}})-1},\\ \P_{\boldsymbol{\lambda}, \boldsymbol{\mu}}(\mathcal{I}_1)&=\frac{1}{\lambda_{u^{-1}}+\deg({u^{-1}})-1}\ \ \text{ and}\\
     \P_{\boldsymbol{\lambda}, \boldsymbol{\mu}}(\mathcal{I}_2)&=1-\frac{1}{\lambda_{u^{-1}}+\deg({u^{-1}})-1}-\frac{\lambda_{u^{-1}}}{\lambda_{u^{-1}}+\deg({u^{-1}})-1}=\frac{\deg({u^{-1}})-2}{\lambda_{u^{-1}}+\deg({u^{-1}})-1}. \end{aligned}\end{equation}

By the construction of the process $\X^{(v)}$, we have that
\begin{align}\label{P(A)}
    \P_{\boldsymbol{\lambda}, \boldsymbol{\mu}}(\mathcal{A}_{u}(v)) =& \sum_{k\in\{0,1,2\}}\P_{\boldsymbol{\lambda}, \boldsymbol{\mu}}(\mathcal{A}_{u}(v) \mid \mathcal{I}_k)\cdot\P_{\boldsymbol{\lambda}, \boldsymbol{\mu}}(\mathcal{I}_k).
\end{align}

Note that on $\mathcal{I}_0=\big\{ u^{-2}=\arg\min_{w\sim u^{-1}} \frac{\xi(u^{-1},w,0)}{r_{\boldsymbol{\lambda}}(u^{-1},w)}\big\}$, right after the first hitting time to $u^{-1}$, the process $\X^{(v)}$ moves from $u^{-1}$ to $u^{-2}$. At this time, each vertex $w\in \Pcal_{u}\setminus\{\rho,u\}$ has been visited and switched to non-excited mode. Therefore, the event
$\mathcal{A}_{u}(v)$ conditional on $\mathcal{I}_0$
is equivalent to the gambler's ruin problem on $\mathcal{P}_u$ {with bias $\mu_w$ at each state $w \in \Pcal_{u}\setminus\{\rho,u\}$}, where the gambler starts at state $u^{-2}$ and reaches $u$ before $\rho$.
Using the ruin probability formula in Section \ref{sec:gam}, we obtain that
\begin{align}\label{P(A|I0)ge}\P_{\boldsymbol{\lambda}, \boldsymbol{\mu}}\big(\mathcal{A}_{u}(v) \mid \mathcal{I}_0\big)=\frac{\varphi(u^{-2})}{\varphi(u)},\end{align}
where we recall that $${\varphi(x) = \sum_{\{y^{-1},y\}\in \mathcal{P}_x}\prod_{\{z^{-1},z\} \in \mathcal{P}_{y^{-1}}} \mu_{z}.}$$

 Conditional on $\mathcal{I}_1=\big\{u=\arg\min_{w\sim u^{-1}} \frac{\xi(u^{-1},w,0)}{r_{\boldsymbol{\lambda}}(u^{-1},w)}\big\}$, the process $\X^{(v)}$ moves from $u^{-1}$ to $u$ right after the first hitting to $u$, and the event $\mathcal{A}_{u}(v)$ occurs immediately. Hence
\begin{align}\label{P(A|I1)ge}\P_{\boldsymbol{\lambda}, \boldsymbol{\mu}}(\mathcal{A}_{u}(v) \mid \mathcal{I}_1)=1.\end{align}

Conditional on $\mathcal{I}_2=(\mathcal{I}_0\cup \mathcal{I}_1)^c$, the process $\X^{(v)}$ moves from $u^{-1}$ to its neighbors on $\mathcal{P}_u$ with bias $\mu_{u^{-1}}$ right after the first hitting to $u^{-1}$. {At this time, each vertex $w\in \Pcal_{u}\setminus\{\rho,u\}$ has been visited and switched to non-excited mode with bias $\mu_{w}$.
Define
\begin{align*}
    \mathcal{B}_1&=\{\text{$\X^{(v)}$ moves from $u^{-1}$ to $u^{-2}$ right after the first hitting time to $u^{-1}$}\},\\
    \mathcal{B}_2&=\{\text{$\X^{(v)}$ moves from $u^{-1}$ to $u$ right after the first hitting time to $u^{-1}$}\}.
\end{align*}
By the construction, we have
$$\P_{\boldsymbol{\lambda}, \boldsymbol{\mu}}(\mathcal{B}_1\mid \mathcal{I}_2)=\frac{\mu_{u^{-1}}}{\mu_{u^{-1}}+1} \text{ and } \P_{\boldsymbol{\lambda}, \boldsymbol{\mu}}(\mathcal{B}_2\mid \mathcal{I}_2)=\frac{1}{\mu_{u^{-1}}+1}.$$
We also notice that:
\begin{itemize}
    \item The conditional event $\mathcal{A}_{u}(v)$ given
$\mathcal{I}_2\cap \mathcal{B}_1$ is equivalent to the same gambler's ruin problem related to the conditional event $\mathcal{A}_{u}(v)$ given $\mathcal{I}_0$. Hence
$$\P_{\boldsymbol{\lambda}, \boldsymbol{\mu}}(\mathcal{A}_{u}(v)  \mid \mathcal{I}_2\cap \mathcal{B}_1)=\P_{\boldsymbol{\lambda}, \boldsymbol{\mu}}(\mathcal{A}_{u}(v) \mid \mathcal{I}_0)=\frac{\varphi(u^{-2})}{\varphi(u)}.$$
\item The conditional event $\mathcal{A}_{u}(v)$ given
$\mathcal{I}_2\cap \mathcal{B}_2$ is equivalent to the same gambler's ruin problem related to the conditional event $\mathcal{A}_{u}(v)$ given $\mathcal{I}_1$. Hence
$$\P_{\boldsymbol{\lambda}, \boldsymbol{\mu}}(\mathcal{A}_{u}(v) \mid \mathcal{I}_2\cap \mathcal{B}_2)=\P_{\boldsymbol{\lambda}, \boldsymbol{\mu}}(\mathcal{A}_{u}(v) \mid \mathcal{I}_1)=1.$$
\end{itemize}
}
By applying the law of total probability, we get:
$$\P_{\boldsymbol{\lambda}, \boldsymbol{\mu}}(\mathcal{A}_{u}(v) \mid \mathcal{I}_2)= \P_{\boldsymbol{\lambda}, \boldsymbol{\mu}}(\mathcal{A}_{u}(v) \mid \mathcal{I}_2 \cap \mathcal{B}_1)\cdot \P_{\boldsymbol{\lambda}, \boldsymbol{\mu}}(\mathcal{B}_1\mid\mathcal{I}_2)+ \P_{\boldsymbol{\lambda}, \boldsymbol{\mu}}(\mathcal{A}_{u}(v) \mid \mathcal{I}_2 \cap \mathcal{B}_2) \cdot \P_{\boldsymbol{\lambda}, \boldsymbol{\mu}}(\mathcal{B}_2\mid\mathcal{I}_2).$$
Therefore
\begin{align}\label{P(A|I2)ge}\P_{\boldsymbol{\lambda}, \boldsymbol{\mu}}(\mathcal{A}_{u}(v) \mid \mathcal{I}_2)=\frac{\varphi(u^{-2})}{\varphi(u)}.\frac{\mu_{u^{-1}}}{\mu_{u^{-1}}+1}+\frac{1}{\mu_{u^{-1}}+1}.\end{align}

From \eqref{prob.Ik}, \eqref{P(A)}, \eqref{P(A|I0)ge}, \eqref{P(A|I1)ge} and \eqref{P(A|I2)ge}, we obtain
   \begin{align*}\P_{\boldsymbol{\lambda}, \boldsymbol{\mu}}(\mathcal{A}_u(v))&= \frac{\varphi(u^{-2})}{\varphi(u)}.\frac{\lambda_{u^{-1}}}{\lambda_{u^{-1}}+\deg({u^{-1}})-1} +\frac{1}{\lambda_{u^{-1}}+\deg({u^{-1}})-1}.\\
   &+\bigg(\frac{\varphi(u^{-2})}{\varphi(u)}.\frac{\mu_{u^{-1}}}{\mu_{u^{-1}}+1}+\frac{1}{\mu_{u^{-1}}+1}\bigg).\frac{\deg({u^{-1}})-2}{\lambda_{u^{-1}}+\deg({u^{-1}})-1}\\
    & = 1 - \frac{ \left(1 - \frac{\varphi(u^{-2})}{\varphi(u)}\right) \left( \lambda_{u^{-1}} + (\deg(u^{-1})-2) \frac{\mu_{u^{-1}}}{\mu_{u^{-1}}+1} \right) }{ \lambda_{u^{-1}} + \deg(u^{-1}) - 1 }=\psi(u^{-1},u).\end{align*}
\end{proof}

The next lemma asserts that the process 
$\X$ is a.s. transient when the percolation cluster 
$\mathcal{C}_{\rm CP}(\rho)$ is infinite with positive probability, and the process is a.s. recurrent when this cluster is a.s. finite. A similar result was proved for once-reinforced random walks in Lemma 7.1 of \cite{CKS2020}. For the sake of completeness, we include a proof which completes details that were omitted in \cite{CKS2020}.\\

\begin{lemma} \label{per2.lem} We have
\begin{align}\label{event.equiv}
    {\P_{\boldsymbol{\lambda}, \boldsymbol{\mu}}}(\tau^+_{\rho} = \infty) = {\P_{\boldsymbol{\lambda}, \boldsymbol{\mu}}} (|\mathcal{C}(\rho)| = \infty) = {\P_{\boldsymbol{\lambda}, \boldsymbol{\mu}}}(|\mathcal{C}_{\rm CP}(\rho)| = \infty).
\end{align} 
Consequentially, under probability measure $\P_{\boldsymbol{\lambda}, \boldsymbol{\mu}}$, the process $\X$ \begin{itemize}
    \item is a.s. recurrent if $ {\P_{\boldsymbol{\lambda}, \boldsymbol{\mu}}}(|\mathcal{C}_{\rm CP}(\rho)| < \infty)=1$, 
\item is a.s. transient if ${\P_{\boldsymbol{\lambda}, \boldsymbol{\mu}}}(|\mathcal{C}_{\rm CP}(\rho)| = \infty)>0$.
\end{itemize}
\end{lemma}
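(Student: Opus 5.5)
I will prove the two equalities in \eqref{event.equiv} separately and then derive the recurrence/transience dichotomy.

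\emph{First equality.} On $\{\tau_\rho^+=\infty\}$, the walk never returns to $\rho$. Since $\Tcal$ is locally finite and the walk started at $\rho$ never comes back, it must visit infinitely many vertices. Indeed, if it stayed forever in a finite set $F$ avoiding $\rho$, some vertex of $F$ would be visited infinitely often, and then it is in non-excited mode with transition probabilities bounded below. A conditional Borel--Cantelli (Lévy) argument then shows that each neighbour, and hence along the finite path, $\rho$, is visited a.s.; this contradicts $\tau_\rho^+=\infty$ up to a null set. So infinitely many vertices $v$ satisfy $\tau_v<\tau_\rho^+$, i.e. $|\mathcal C(\rho)|=\infty$ a.s. on that event. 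Conversely, if $\tau_\rho^+<\infty$, only the finitely many vertices visited before time $\tau_\rho^+$ can have $\tau_v<\tau_\rho^+$, so $\mathcal C(\rho)$ is finite. Hence $\{\tau_\rho^+=\infty\}=\{|\mathcal C(\rho)|=\infty\}$ up to a null set.

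\emph{Second equality.} The key claim is $\mathcal C(\rho)=\mathcal C_{\rm CP}(\rho)$ a.s., which follows from the coupling \eqref{coind}. Fix $v$. If $\tau_v<\tau_\rho^+$, then the restriction of $\X$ to $\mathcal P_v$ reaches $v$ before returning to $\rho$, with the hitting of $v$ occurring before the killing time $K_{\mathcal P_v}$. Since $\X^{(v)}$ agrees with the restriction up to $K_{\mathcal P_v}$, we get $\tau_v(v)<\tau^+_\rho(v)$. Conversely, suppose $\tau_v\ge\tau_\rho^+$. If $\tau_\rho^+<\infty$, then the restriction returns to $\rho$ before hitting $v$, again before the killing time, so $\X^{(v)}$ does too and the edge is closed. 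If $\tau_v=\tau_\rho^+=\infty$, this case is harmless: by the first paragraph it happens only on $\{|\mathcal C(\rho)|=\infty\}$. In that case $\{|\mathcal C_{\rm CP}(\rho)|=\infty\}$ holds anyway because $\mathcal C(\rho)\subseteq\mathcal C_{\rm CP}(\rho)$. When $\tau_\rho^+<\infty$, the sets coincide and are both finite. This gives the second equality. The delicate point, which I expect to be the main obstacle, is checking carefully that the relevant hitting events happen strictly before $K_{\mathcal P_v}$, so that \eqref{coind} applies. I would handle it by noting that $\delta_{m_n}(\mathcal P_v)$ is finite for every step of the restriction up to and including the step that hits $v$ or returns to $\rho$, since that step is an actual move of $\X$.

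\emph{Consequences.} If $\P_{\boldsymbol\lambda,\boldsymbol\mu}(|\mathcal C_{\rm CP}(\rho)|=\infty)>0$, then $\P(\tau_\rho^+=\infty)>0$. I would then use a 0--1 style argument: after any visit to $\rho$, the future walk restricted to unexplored regions still has positive escape probability. A cleaner route is to observe that recurrence of a vertex forces recurrence of all vertices, by the Lévy Borel--Cantelli argument and the fact that non-excited transition probabilities are bounded below at each vertex. Transience and recurrence are thus complementary events, and escape with positive probability at each return gives transience a.s. If instead $|\mathcal C_{\rm CP}(\rho)|<\infty$ a.s., then $\tau_\rho^+<\infty$ a.s. Moreover, the same argument applies after each return, since after the first visit the vertices on the explored set are non-excited and the event of no further return is dominated by the corresponding percolation event. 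Hence $\rho$ is visited infinitely often, and by the neighbour argument every vertex is as well, giving recurrence.
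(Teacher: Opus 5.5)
Your first equality is fine, and your L\'evy--Borel--Cantelli argument that the walk visits infinitely many vertices on $\{\tau^+_\rho=\infty\}$ is more careful than the paper's one-line claim.

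\textbf{The gap.} It is in the second part, at the sentence ``if $\tau_\rho^+<\infty$, then the restriction returns to $\rho$ before hitting $v$.'' The restriction to $\mathcal{P}_v$ records only \emph{changes} of position inside $\mathcal{P}_v$. Suppose $\X$ first steps into a child $w$ of $\rho$ that is not on $\mathcal{P}_v$ and comes back to $\rho$ from $w$. Then the restriction sits at $\rho$ throughout and records nothing. This return is invisible to $\X^{(v)}$, which may afterwards run straight to $v$. This is not the killing-time technicality you flagged, and it cannot be patched. By Lemma~\ref{phical}, $\psi(\rho,w)=1$, so every root edge lies in $\mathcal{C}_{\rm CP}(\rho)$. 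On $\{\tau^+_\rho<\infty\}$, however, the only root edge in $\mathcal{C}(\rho)$ is $\{\rho,X_1\}$. Hence $\mathcal{C}(\rho)\neq\mathcal{C}_{\rm CP}(\rho)$ with positive probability as soon as $\deg(\rho)\ge 2$. The paper's own argument has the same oversight: in part (b) it calls the inclusion $\{K_{\mathcal{P}_v}=\infty\}\cap\{\tau_v(v)<\tau^+_\rho(v)\}\subset\{\tau_v<\tau^+_\rho\}$ ``clear'', but it fails for exactly this reason.

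\textbf{The second equality is false as stated.} It fails whenever $\deg(\rho)\ge 2$ and the walk is transient.
Your reasoning does work within one branch. On $\{X_1=w\}$, the first return of $\X$ to $\rho$ comes from $w$, so it is registered by every restriction to $\mathcal{P}_v$ with $v$ below $w$. Granting \eqref{coind}, $\{\tau^+_\rho=\infty\}$ then agrees a.s.\ with the event $F_w$ that the part of $\mathcal{C}_{\rm CP}(\rho)$ below $w$ is infinite.
The events $F_w$, $w\sim\rho$, are built from disjoint families of clocks, since each extension leaves $\rho$ along its own path regardless of the clocks at $\rho$. Hence they are independent of one another and of $X_1$. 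Write $p_w$ for their probabilities. Then $\P_{\boldsymbol{\lambda},\boldsymbol{\mu}}(\tau^+_\rho=\infty)=\deg(\rho)^{-1}\sum_{w\sim\rho}p_w$, whereas $\P_{\boldsymbol{\lambda},\boldsymbol{\mu}}(|\mathcal{C}_{\rm CP}(\rho)|=\infty)=1-\prod_{w\sim\rho}(1-p_w)$.
Each $p_w<1$, because the walk steps from $w$ back to $\rho$ with positive probability. So the second quantity is strictly larger once some $p_w>0$. For example, simple random walk ($\lambda\equiv\mu\equiv 1$) on the rooted binary tree (every vertex has two children) gives $1/2$ versus $3/4$.

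\textbf{What survives.} The branchwise identity does give $\P_{\boldsymbol{\lambda},\boldsymbol{\mu}}(\tau^+_\rho=\infty)>0\iff\P_{\boldsymbol{\lambda},\boldsymbol{\mu}}(|\mathcal{C}_{\rm CP}(\rho)|=\infty)>0$. This is all that the dichotomy and Theorem~\ref{RT.thm} use, and the full equalities do hold when $\deg(\rho)=1$.

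\textbf{Your ``Consequences'' paragraph.} It is only a sketch. ``Escape with positive probability at each return'' needs a bound that is uniform over the explored region. The claim that no further return is dominated by the percolation event is also unjustified. After a return the explored vertices are non-excited, so the restarted walk is a different process. When $\lambda_v\ge\mu_v$, its escape probability is if anything larger, so a naive comparison goes the wrong way. The paper instead appeals to the $0$--$1$ argument of Proposition~11.1 in \cite{CKS2020}.
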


    \begin{proof}
        (a) We first show that a.s. 
        $$\{\tau_{\rho}^+=\infty\}=\{|\mathcal{C}(\rho)|=\infty\}.$$

Suppose that $\tau^+_\rho = \infty$.  
Then the process $\X$ never returns to $\rho$ after the starting time. For each vertex $v$ visited by the process, the  edge $\{v^{-1}, v\}$ belongs to the cluster $\mathcal{C}(\rho)$. Since the walk continues indefinitely without returning, it must visit infinitely many distinct vertices. Therefore, the cluster $\mathcal{C}(\rho)$ contains infinitely many edges, and hence
$
\{\tau^+_\rho = \infty\} \subseteq \{|\mathcal{C}(\rho)| = \infty\}.
$

Conversely, if $\tau^+_\rho < \infty$, then only the vertices visited by $\X$ before that first return contribute edges to $\mathcal{C}(\rho)$.  
Because the tree is locally finite, the number of vertices visited before a finite time is finite, so $|\mathcal{C}(\rho)|$ must also be finite.  
Hence,
$
\{|\mathcal{C}(\rho)| = \infty\} \subseteq \{\tau^+_\rho = \infty\}.
$
Therefore, we obtain
$$ \{|\mathcal{C}(\rho)| = \infty\} =  \{\tau^+_\rho = \infty\}. $$

(b) To complete the proof for \eqref{event.equiv}, we show that a.s.
$$\{|\mathcal{C}(\rho)|=\infty\} = \{|\mathcal{C}_{\rm CP}(\rho)|=\infty\}.$$

Assume first that $|\mathcal{C}_{\rm CP}(\rho)| = \infty$. 
Then for every $n \ge 0$, there exists an edge $e = \{v^{-1}, v\}$ with $|v| = n$ such that 
$\tau_v(v) < \tau_\rho^+(v).$ Recall that $K_{\mathcal{P}_v}$ is the kill time of the restriction of $\X$ to $\mathcal{P}_v$. By the construction of the extension $\X^{(v)}$, the process $\X^{(v)}$ coincides with the restriction of $\X$ to $\mathcal{P}_v$ up to the kill time  $K_{\mathcal{P}_v}$. We distinguish the following two cases:
\begin{itemize}
    \item On the event $\{K_{\mathcal{P}_v}=\infty\}$ the process $\X^{(v)}$ coincides with the restriction of $\X$ to $\mathcal{P}_v$ for all time. Hence, it is clear that
$\{K_{\mathcal{P}_v}=\infty\}\cap\{\tau_v(v) < \tau_\rho^+(v)\}\subset\{
\tau_v < \tau_\rho^+\}.
$
\item On the event $\{K_{\mathcal{P}_v}<\infty\}$: if $\tau_v(v) < \tau_\rho^+(v)\le K_{\mathcal{P}_v}$ then it is clear that $\tau_v < \tau_\rho^+$;  if $\tau_v(v) < \tau_\rho^+(v)$ and $K_{\mathcal{P}_v}<\tau_\rho^+(v)$ then after starting at $\rho$, the process $\X$ eventually exits $\mathcal{P}_v$ at a vertex $u\in \mathcal{P}_{v}\setminus\{\rho\}$ and never returns back to $\mathcal{P}_v$ to visit $\rho$ for the second time. Hence, $\{K_{\mathcal{P}_v}<\infty\}\cap\{\tau_v(v) < \tau_\rho^+(v)\}\subset \{\tau_{\rho}^+=\infty\}\cup \{\tau_v < \tau_\rho^+\}.$
\end{itemize}
As one of the above two cases occurs  with infinite many $v$, we obtain that  
$$
\{|\mathcal{C}_{\rm CP}(\rho)| = \infty\}\subset  \{|\mathcal{C}(\rho)| = \infty\}.
$$

Conversely, suppose that $|\mathcal{C}(\rho)| = \infty$. 
Then for every $n \ge 0$, there exists an edge $e = \{v^{-1}, v\}$ with $|v| = n$ such that 
$\tau_v < \tau_\rho^+.$   We distinguish the following two cases:
   \begin{itemize}
       \item On the event $\{K_{\mathcal{P}_v}=\infty\}$, 
    the restriction of $\X$ to $\mathcal{P}_v$ and the extension $\X^{(v)}$ coincide for all time and thus
    $\{K_{\mathcal{P}_v}=\infty\}\cap
    \{\tau_v < \tau_\rho^+\} \subset \{\tau_v(v) < \tau_\rho^+(v)\}.
    $  
 \item   On the event $\{K_{\mathcal{P}_v}<\infty\}$: if $\tau_v < \tau_\rho^+$ and $\tau_{\rho}^+(v)\le K_{\mathcal{P}_v}$ then it is clear that $\tau_v(v) < \tau_\rho^+(v)$; if $\tau_v < \tau_\rho^+$ and $K_{\mathcal{P}_v}<\tau_{\rho}^+(v)$ then after starting from $\rho$ the process $\X$ eventually exits $\mathcal P_v$ at some vertex $u\in\mathcal P_v\setminus\{\rho\}$ and never returns to $\mathcal P_v$ to visit $\rho$ for the second time. In the later scenario, since $\tau_v<\tau_\rho^+=\infty$, the vertex $v$ is visited by this permanent exit, yielding that $\tau_v(v)\le K_{\mathcal{P}_v}< \tau_\rho^+(v)$. Hence,
    $\{K_{\mathcal{P}_v}<\infty\}\cap  \{\tau_v < \tau_\rho^+ \}  \subset \{\tau_v(v)<\tau_\rho^+(v)\}.$
       \end{itemize}
    As one of the above two cases occurs with infinite many $v$, we obtain that
    $$\{|\mathcal{C}(\rho)| = \infty\}\subset  \{|\mathcal{C}_{\rm CP}(\rho)| = \infty\}.$$

    (c) Following exactly the same argument in the proof Proposition 11.1 in \cite{CKS2020}, one can deduce that $\X$ is transient if and only if it visits $\rho$ infinitely often, and obtain the 0-1 law for transience and recurrence, i.e.
    $$\P_{\boldsymbol{\lambda}, \boldsymbol{\mu}}(\X \text{ is recurrent})=1-\P_{\boldsymbol{\lambda}, \boldsymbol{\mu}}(\X \text{ is transient})\in \{0,1\}.$$
Combine this with \eqref{event.equiv}, we obtain the conclusion of the lemma.
    \end{proof}

Recall that $\varphi(\rho)=0$ and, for each $x\in V$,  \begin{align*}
    \varphi(x) := \sum_{e\in \mathcal{P}_x} R(e) \quad \text{with}\quad R(e):=\prod_{\{z^{-1}, z\} \in \mathcal{P}_{e} } \mu_{z^{-1}} \text{ for each $e\in E$.}
    \end{align*}
\begin{lemma} \label{per1.lem}
    Assume that 
    $$\sup_{v\in V, |v|\ge 2}\frac{R(v^{-1},v)}{\varphi(v^{-1})}<\infty.$$ 
 Then under the quenched law $\P_{\boldsymbol{\lambda}, \boldsymbol{\mu}}$, the ruin percolation is quasi-independent.
\end{lemma}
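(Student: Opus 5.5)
The plan is to verify the defining inequality of quasi-independence directly. Fix $x,y\in V$ and let $z=x\wedge y$. The cases $z\in\{x,y\}$ are trivial, so assume $z\neq x,y$. Let $z_x,z_y$ be the children of $z$ on $\mathcal P_x,\mathcal P_y$.

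By Lemma \ref{phical} we have $\P_{\boldsymbol\lambda,\boldsymbol\mu}(\rho\leftrightarrow x)=\Psi(x)$. Hence
$$\P_{\boldsymbol\lambda,\boldsymbol\mu}(\rho\leftrightarrow x\mid \rho\leftrightarrow z)=\prod_{z< g\le x}\psi(g),$$
and similarly for $y$. So it suffices to show
$$\P_{\boldsymbol\lambda,\boldsymbol\mu}(\rho\leftrightarrow x,\rho\leftrightarrow y)\le M\,\Psi(z)\prod_{z<g\le x}\psi(g)\prod_{z<g\le y}\psi(g).$$

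\textbf{Step 1: a uniform lower bound on $\psi$.} This is where the hypothesis enters. From \eqref{def.phi}, $\varphi(u)-\varphi(u^{-2})=R(u^{-2},u^{-1})+R(u^{-1},u)$. Moreover, $R(u^{-1},u)=\mu_{u^{-2}}R(u^{-2},u^{-1})$, and the hypothesis applied at $u$ and at $u^{-1}$ bounds $R(u^{-2},u^{-1})/\varphi(u^{-2})$ and $R(u^{-1},u)/\varphi(u^{-1})$. Together these give $1-\varphi(u^{-2})/\varphi(u)\le 1-c_0$ for a constant $c_0>0$. Since the second factor in \eqref{def.psi} is at most $1$, this yields $\psi(g)\ge c_0$ for every edge $g$. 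In particular the factors $\psi(z,z_x)$ and $\psi(z,z_y)$, and any bounded number of edges near $z$, cost at most a constant.

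\textbf{Step 2: decomposition of the joint event.} By Lemma \ref{phical}, $\{\rho\leftrightarrow x\}$ is the intersection of $\{\rho\leftrightarrow z\}$ with the events $\mathcal A_u(x)$ for $u$ ranging over $\mathcal P_x$ strictly below $z$, and likewise for $y$. Let $\mathcal F$ be the $\sigma$-field generated by $\xi(w,\cdot,\cdot)$ for $w\in\mathcal P_z$; the event $\{\rho\leftrightarrow z\}$ is $\mathcal F$-measurable. The $\xi$'s attached to vertices in $\mathcal P_x\setminus\mathcal P_z$ and those attached to $\mathcal P_y\setminus\mathcal P_z$ are independent of each other and of $\mathcal F$. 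Consequently, given $\mathcal F$, the $x$-branch events and the $y$-branch events are conditionally independent, and
$$\P(\rho\leftrightarrow x,\rho\leftrightarrow y)=\mathbf E\big[\1_{\{\rho\leftrightarrow z\}}f_x(\mathcal F)f_y(\mathcal F)\big],$$
where $f_x(\mathcal F)=\P(\text{$x$-branch events}\mid\mathcal F)$ and $f_y$ is defined analogously.

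\textbf{Step 3 (main obstacle): an $\mathcal F$-free upper bound on $f_x$.} The goal is $f_x(\mathcal F)\le C\prod_{z<g\le x}\psi(g)$ uniformly in $\mathcal F$; multiplying the two bounds and integrating over $\{\rho\leftrightarrow z\}$ then completes the proof.

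The dependence on $\mathcal F$ enters only through the gambler's-ruin pieces in Lemma \ref{phical}: when the walk falls back from $u^{-2}$ below $z$, it uses the (shared) exponentials on $\mathcal P_z$ to decide whether it hits $\rho$ before returning to $z$. My plan for handling this:
\begin{itemize}
\item Define $h_x$ as the probability of the analogous event for the walk on $\mathcal P_x\setminus\mathcal P_{z^{-1}}$, where each return to $z$ counts as \emph{allowed to continue}. This quantity is $\mathcal F$-independent, since after time $\tau_z$ all of $\mathcal P_z$ is in non-excited mode and those steps form a Markov chain.
\item Dominate $f_x\le h_x$ by monotonicity: returning to $\rho$ only kills the walk.
\item Compare $h_x$ to $\prod_{z<g\le x}\psi(g)$ via the explicit ruin formula. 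Only the terms involving $\varphi$ on $\mathcal P_z$ differ, and by the hypothesis $\sup R/\varphi<\infty$ together with Step 1 these differ by a bounded factor.
\end{itemize}
If this direct comparison fails, the fallback is to bound excursions below $z$ by a geometric number of returns, each succeeding with probability bounded below by $c_0$ through Step 1.
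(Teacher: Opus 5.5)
\textbf{There is a genuine gap in Step 3: the bound $f_x(\mathcal F)\le C\prod_{z<g\le x}\psi(g)$, uniformly in $\mathcal F$ and with $C$ independent of $x$, is false. No choice of $h_x$ can rescue it.}

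Conditioning on all clocks at the vertices of $\mathcal P_z$ \emph{freezes} the walk on $\mathcal P_z$. In Rubin's construction, the sequence of departures from a vertex is a deterministic function of the clocks at that vertex. Hence the number $D_x(\mathcal F)$ of departures from $z$ to $z_x$ made before $\X^{(x)}$ returns to $\rho$ is $\mathcal F$-measurable. It follows that $f_x(\mathcal F)=\P(N_x\le D_x(\mathcal F))$, where $N_x$ is the branch-measurable number of excursions into the $x$-branch needed to hit $x$, and $N_x<\infty$ a.s.

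The variable $D_x$ is unbounded on $\{\rho\leftrightarrow z\}$. It is governed by the paper's geometric $N(e)$, i.e.\ by the exponential time $G(e)$ of rate $1/\varphi(z)$. Therefore $\mathrm{ess\,sup}\, f_x=1$ there. By contrast, for $\lambda\equiv\mu\equiv1$ one has $\prod_{z<g\le x}\psi(g)=|z|/|x|\to0$ as $|x|\to\infty$.

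So any $\mathcal F$-free $h_x$ with $f_x\le h_x$ must be essentially $1$. Your specific $h_x$, in which returns to $z$ never kill, is in fact identically $1$, since an unkilled walk on the finite segment from $z$ to $x$ hits $x$ a.s. All the smallness of $\prod_{z<g\le x}\psi(g)$ comes from the competition with returning to $\rho$, which is exactly the randomness you froze. The ``fallback'' has the same defect, and a lower bound $\psi\ge c_0$ cannot produce an \emph{upper} bound on the joint probability.

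\textbf{The averaging over $\mathcal F$ cannot be avoided, and that is where the real content of the lemma lies.} After integrating out the branch clocks and the clocks on $(z,z_x)$ and $(z,z_y)$, both conditional probabilities become increasing functions $F_x(G)$, $F_y(G)$ of the \emph{same} variable $G=G(e)$. You then need $\mathbf E[F_x(G)F_y(G)]\le M\,\mathbf E[F_x(G)]\,\mathbf E[F_y(G)]$. Chebyshev's association inequality gives the reverse inequality, so the bounded positive correlation has to be proved quantitatively.

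The paper does this in four moves:
\begin{enumerate}
\item It writes $F_j(g)=\P(G^*(e_j)<g)$ with $G(e)\sim\mathrm{Exp}(1/\varphi(e^+))$.
\item It uses $\max\{x_1,x_2\}\ge (x_1+x_2)/2$ to bound $\mathbf E[e^{-p_e\max\{G^*(e_1),G^*(e_2)\}}]$ by the product of the $\mathbf E[e^{-p_eG^*(e_j)/2}]$.
\item It interprets the halved rate as doubling the resistances along $\mathcal P_e$.
\item It shows, via the explicit formula for $\psi$ and the telescoping sum $\sum\bigl(1/\varphi(u^{-2})-1/\varphi(u)\bigr)$, that this modification changes each conditional connection probability by at most a factor $e^{K}$.
\end{enumerate}
This comparison is the main place where $\sup R/\varphi<\infty$ enters. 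A lower bound like your Step 1 is used only for the single factor $q_e$.

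Your Step 2, the conditional independence given $\mathcal F$, is correct. Step 1 gives nothing for $|u|=2$, since $\varphi(\rho)=0$, but only finitely many edges are affected, so that is minor.
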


\begin{proof}
        For an edge $g\in E$, denote by $g^+$ and $g^-$ the two endpoints of $g$ where $g^+$ is a child of $g^-$.
Fix two edges $e_1, e_2\in E$. If $e_1^+\wedge e_2^+ = \rho$, then the extensions $\X^{(e_1^+)}$ and $\X^{(e_2^+)}$ are independent, as they are defined by two disjoint collections of exponential random variables. In this case, we must have $$\mathbb{P}_{\boldsymbol{\lambda}, \boldsymbol{\mu}}(e_1,e_2\in \mathcal{C}_{\rm CP}(\rho))
  =\mathbb{P}_{\boldsymbol{\lambda}, \boldsymbol{\mu}}(e_1\in \mathcal{C}_{\rm CP}(\rho)\mathbb{P}_{\boldsymbol{\lambda}, \boldsymbol{\mu}}(e_2\in \mathcal{C}_{\rm CP}(\rho)).$$
  
  Assume from now that $e_1^+\wedge e_2^+ \neq \rho$. Let $e=\{e^-, e^+\}$ be the last common edge of $\mathcal{P}_{e_1}$ and $\mathcal{P}_{e_2}$. For $j\in\{1,2\}$, let $v_j$ be the child of $e^+$ lying on the path $\mathcal{P}_{e_j}$ (see Figure \ref{fig1}). Note that the extensions $\X^{(e_1^+)}$ and $\X^{(e_2^+)}$ are dependent as they are constructed with the same exponential random variables on the path connecting $\rho$ and $e^+$.
\begin{figure}[h]
\centering
\begin{tikzpicture}[
    every node/.style = {circle, fill=black, inner sep=0pt, minimum size=5pt},
    level distance = 1.5cm,
    sibling distance = 1.5cm,
    edge from parent/.style = {draw=black},
    grow = right       
  ]
\node[label=above:$\rho$] {}                      
  child { node {}                                 
    child { node[label=above:$e^-$] {}
      child { node[label=above:$e^+$] (lca) {}    
        child { node[label=above:$v_1$] (v1) {}
          child { node {} 
            child { node[label=above:$e_1^-$] {}
              child { node[label=above:$e_1^+$] {} }
            }
          }
        }
        child { node[label=above:$v_2$] (v2) {}
          child { node {}
            child { node[label=above:$e_2^-$] {}
              child { node[label=above:$e_2^+$] {} }
            }
          }
        }
      }
    }
  };
\end{tikzpicture}

\caption{The two shortest paths connecting $\rho$ with $e_1=\{e_1^-, e_1^+\}$ and $e_2=\{e_2^-, e_2^+\}$ with the last common edge $e=\{e^-,e^+\}$.} \label{fig1}
\end{figure}
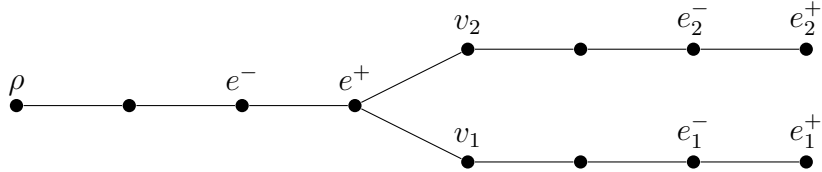

 Recall that $\tau_{e^+}(e^+)$ and $\tau_{e^+}^+(e^+)$ are the first hitting time and the first return time of $\X^{(e^+)}$ to $e^+$ respectively. Let $$\sigma_e=\{n\ge \tau_{e^+}^+(e^+)+1: X_{n}^{(e^+)}=\rho\}$$
 be the first time after $\tau_{e^+}^+(e^+)$ the process $\X^{(e^+)}$ returns to $\rho$. Let
\begin{align*}
N(e)&=\sum_{n=\tau_{e^+}^+(e^+)}^{\sigma_e}\mathbf{1}_{\big\{X^{(e^+)}_{n}=e^+,X^{(e^+)}_{n+1}=e^-\big\}}
\end{align*}
be the number of times the process $\X^{(e^+)}$ 
traverses from $e^{+}$ to $e^{-}$ since the first return time to $e^+$ before returning to the root. At time $\tau_{e^+}^+(e^+)$, all vertices in $\mathcal{P}_e$ have been visited. Therefore, from this time, the process $\X^{(e^+)}$ moves from a vertex $u\in \mathcal{P}_{e^-}$ to its neighbors with bias $\mu_u$. Note that a.s. $X^{(e^+)}_{\tau_{e^+}^+(e^+)+1}=e^-$ since the process $\X^{(e^+)}$ reflects from $e^+$ to $e^-$ with probability 1. 
We split the movements of the process $\X^{(e^+)}$ during $[\tau_{e^+}^+(e^+), \infty)$ into excursions between two consecutive visits to $e^-$. In each such excursion, after staying at $e^-$, the process reaches $\rho$ before $e^+$ with success probability $1- \frac{\varphi(e^{-})}{\varphi(e^{+})}$ (by applying the gambler ruin problem in Section \ref{gambler.ruin}). As these excursions are independent, $N(e)$ is a geometric random variable with parameter $1- \frac{\varphi(e^{-})}{\varphi(e^{+})}$.

Let
\begin{align*}
G(e)&= \sum_{n=1}^{N(e)} \frac{\xi(e^+,e^-,n+1)}{r_{\boldsymbol{\mu}}(e^+,e^-)}=R(e)\sum_{n=1}^{N(e)} \xi(e^+,e^-,n+1),
\end{align*}
which represents the total time consumed by the exponential clocks attached to the oriented edge $(e^+, e^-)$ since the first return time to $e^+$ before $\X^{(e^+)}$ or $\X^{(e_j^+)}$ goes back to the root. As $G(e)$ is a sum of $N(e)$ i.i.d. exponential random variables with rate $r_{\boldsymbol{\mu}}(e^{+}, e^{-})=1/R(e)$, it is exponentially distributed with rate $$p_e:=\Big(1-\frac{\varphi(e^{-})}{\varphi(e^{+})}\Big) \frac{1}{R(e)}=\frac{1}{\varphi(e^{+})}.$$
 
Let $\X^{[e^+,e_j^+]}$ be the extension of $\X$ on the shortest path connecting $e^+$ and $e_j^+$ which starts from $e^+$. Let
\begin{align*}
N^*(e_j) & = \sum_{n=\tau_{e^+}^+(e^+)}^{\tau_{e_j^+}(e_j^+)}\mathbf{1}_{\big\{X^{[e^+,e_j^+]}_{n}=e^+,X^{[e^+,e_j^+]}_{n+1}=v_j\big\}},\end{align*}
which represents the number of times the process $\X^{[e^+, e_j^+]}$ 
traverses from $e^{+}$ to $v_j$ since the first return time to $e^+$
before
the first hitting time to the vertex $e_j^+$. Let
\begin{align*}
G^*(e_j) & = \sum_{n=1}^{N^*(e_j)} \frac{\xi(e^+,v_j,n)}{r_{\boldsymbol{\mu}}(e^+,v_j)}.
\end{align*}
which is the time consumed by the exponential clocks attached to the oriented edge $(e^+, v_j)$
since the first return time to $e^+$ before $\mathbf{X}^{(e_j)}$, or $\mathbf{X}^{[e^+, e_j^+]}$ hits $e_j^+$ for the first time. Note that 
$G^*(e_1)$ and $G^*(e_2)$ are independent random variables as they are defined by two disjoint collections of exponential random variables. Denote by $f_1$ and $f_2$ the probability density functions of $G^*(e_1)$ and $G^*(e_2)$ respectively. 

To prove the quasi-independence, we compute the probabilities of the events $\{e_1\in \mathcal{C}_{\rm CP}(\rho)\}$, $\{e_2\in \mathcal{C}_{\rm CP}(\rho)\}$ and $\{e_1, e_2\in \mathcal{C}_{\rm CP}(\rho)\}$ conditional on $\{e\in \mathcal{C}_{\rm CP}(\rho)\}$ using the above-mentioned random variables $G(e)$, $G^*(e_1)$ and $G^*(e_2)$. For the sake of clarity, we separate the remainder of the proof into four steps.

\textbf{Step 1.} In this step, we show that for $j\in \{1,2\}$,
\begin{align}\label{condp.ej}
     & \mathbb{P}_{\boldsymbol{\lambda}, \boldsymbol{\mu}}(e_j \in \mathcal{C}_{\rm CP}(\rho) \mid e \in \mathcal{C}_{\rm CP}(\rho))
   \\
\nonumber   & =\left( 1 - \frac{ \left(1 - \frac{\varphi(e^-)}{\varphi(e^+)}\right) \left( \lambda_{e^+} + (\deg(e^+)-2) \frac{\mu_{e^+}}{\mu_{e^+}+1} \right) }{ \lambda_{e^+} + \deg(e^+) - 1 } \right) \mathbb{P}_{\boldsymbol{\lambda}, \boldsymbol{\mu}}(G(e) > G^*(e_j)) \text{ and}\\
& \label{condp.e1e2}
  \mathbb{P}_{\boldsymbol{\lambda}, \boldsymbol{\mu}}(e_1, e_2 \in \mathcal{C}_{\rm CP}(\rho) \mid e \in \mathcal{C}_{\rm CP}(\rho ))\\
  \nonumber  & = \left( 1 - \frac{ \left(1 - \frac{\varphi(e^-)}{\varphi(e^+)}\right) \left( \lambda_{e^+} + \frac{2\mu_{e^+}}{1+\mu_{e^+}} + \frac{(\deg(e^+)-3)\mu_{e^+}}{2+\mu_{e^+}}  \right) }{ \lambda_{e^+} + \deg(e^+) - 1 } \right) \mathbb{P}_{\boldsymbol{\lambda}, \boldsymbol{\mu}}\big(G(e) > \max\{G^*(e_1), G^*(e_2)\}\big). 
\end{align}
Let 
\begin{align*}
    \mathcal{I}_0&=
  \Big\{ e^{-}=\arg\min_{u\sim e^+} \frac{\xi(e^+,u,0)}{r_{\boldsymbol{\lambda}}(e^+,u)}\Big\}, 
\mathcal{I}_1 = \Big\{ v_1 =\arg\min_{u\sim e^+} \frac{\xi(e^+,u,0)}{r_{\boldsymbol{\lambda}}(e^+,u)}\Big\},  \mathcal{I}_2 = \Big\{ v_2 =\arg\min_{u\sim e^+} \frac{\xi(e^+,u,0)}{r_{\boldsymbol{\lambda}}(e^+,u)}\Big\},\\
\mathcal{I}_3 &= (\mathcal{I}_{0}\cup \mathcal{I}_{1}\cup \mathcal{I}_{2})^c.
\end{align*}
Note that the events $\mathcal
I_k$ with $k\in\{0,1,2,3\}$ are independent of $\{e\in \mathcal{C}_{\rm CP}(\rho)\}$.
By the law of total probability, we have
\begin{equation}\label{tplaw}
    \begin{aligned}
   & \mathbb{P}_{\boldsymbol{\lambda}, \boldsymbol{\mu}}(e_1 \in \mathcal{C}_{\rm CP}(\rho) \mid e \in \mathcal{C}_{\rm CP}(\rho)) =   \mathbb{P}_{\boldsymbol{\lambda}, \boldsymbol{\mu}}(e_1 \in \mathcal{C}_{\rm CP}(\rho) \mid \mathcal{I}_0,  e \in \mathcal{C}_{\rm CP}(\rho))\cdot \P(\mathcal{I}_0 )\\
  & \hspace{100pt} + \mathbb{P}_{\boldsymbol{\lambda}, \boldsymbol{\mu}}(e_1 \in \mathcal{C}_{\rm CP}(\rho) \mid \mathcal{I}_1,  e \in \mathcal{C}_{\rm CP}(\rho))\cdot \P(\mathcal{I}_1  )\\
 & \hspace{100pt} + \mathbb{P}_{\boldsymbol{\lambda}, \boldsymbol{\mu}}(e_1 \in \mathcal{C}_{\rm CP}(\rho) \mid \mathcal{I}_2\cup \mathcal{I}_3,  e \in \mathcal{C}_{\rm CP}(\rho))\cdot \P(\mathcal{I}_2\cup \mathcal{I}_3 ).
\end{aligned}
 \end{equation}
Notice that \begin{align}\label{prob.I}
    \mathbb{P}(\mathcal{I}_0)=\frac{\lambda_{e^+}}{\lambda_{e^+}+\deg(e^+)-1}, \P(\mathcal{I}_1)=\frac{1}{\lambda_{e^+}+\deg(e^+)-1}, \P(\mathcal{I}_2\cup \mathcal{I}_3)=\frac{\deg(e^+)-2}{\lambda_{e^+}+\deg(e^+)-1}.
\end{align}

On $\mathcal{I}_0\cap \{e\in \mathcal{C}_{\rm CP}(\rho)\}$, the process $\X^{(e_1^+)}$ jumps to $e^-$ right after the first hitting time to $e^+$. The event that $e_1$ belongs to the cluster $\mathcal{C}_{\rm CP}(\rho)$ conditional on $\{e\in\mathcal{C}_{\rm CP}(\rho)\}\cap\mathcal{I}_0$ is equivalent to the intersection of the two following independent events:
\begin{itemize}
    \item The process $\X^{(e^+_1)}$ after the first jump from $e^+$ to $e^-$ will hit $e^+$ again before returning to $\rho$. By the gambler ruin's problem, this event happens with probability $\frac{\varphi(e^-)}{\varphi(e^+)}$.
    \item $G(e)>G^*(e_1)$. This event allows the process $\X^{(e_1^+)}$ after the first return to $e^+$ will hit $e_1^+$ before going to $\rho$.  
\end{itemize}
Hence,
        \begin{align}\label{case.I0}
\mathbb{P}_{\boldsymbol{\lambda}, \boldsymbol{\mu}}(e_1 \in \mathcal{C}_{\rm CP}(\rho) \mid e \in \mathcal{C}_{\rm CP}(\rho), \mathcal{I}_0) = \frac{\varphi(e^-)}{\varphi(e^+)} \mathbb{P}_{\boldsymbol{\lambda}, \boldsymbol{\mu}}\big(G(e) > G^*(e_1)\big) = \frac{\varphi(e^-)}{\varphi(e^+)} \int_0^{+\infty} e^{-p_e x} f_1(x)\rmd x. \end{align}

On $\mathcal{I}_1\cap \{e\in \mathcal{C}_{\rm CP}(\rho)\}$, the process $\X^{(e_1^+)}$ jumps to $v_1$ right after the first hitting time to $e^+$. Before returning to $\rho$, the process $\X^{(e_1^+)}$ must return to $e^+$. The event that $e_1\in \mathcal{C}_{\rm CP}(\rho)$ conditional on $\{e\in\mathcal{C}_{\rm CP}(\rho)\}\cap\mathcal{I}_1$ is equivalent to the  $G(e)>G^*(e_1)$, which allows the process $\X^{(e_1^+)}$ after the first return to $e^+$ will reach $e_j^+$ before going to $\rho$.  
Hence,
        \begin{align}\label{case.I1}
\mathbb{P}_{\boldsymbol{\lambda}, \boldsymbol{\mu}}(e_1 \in \mathcal{C}_{\rm CP}(\rho) \mid e \in \mathcal{C}_{\rm CP}(\rho), \mathcal{I}_1) = \mathbb{P}_{\boldsymbol{\lambda}, \boldsymbol{\mu}}(G(e) > G^*(e_1). \end{align}

On $(\mathcal{I}_2\cup \mathcal{I}_3)\cap \{e\in \mathcal{C}_{CP}(\rho)\}$, after the first hitting time to $e^+$, the process $\X^{(e_1^+)}$ jumps to either $e^-$ with probability $\frac{\mu_{e^+}}{1+\mu_{e^+}}$ or $v_1$ with probability $\frac{1}{1+\mu_{e^+}}$. The event that $e_1$ belongs to the cluster $\mathcal{C}_{\rm CP}(\rho)$ conditional on $\{e\in\mathcal{C}_{\rm CP}(\rho)\}\cap(\mathcal{I}_2\cup \mathcal{I}_3)$ is equivalent to the intersection of the two following independent events:
\begin{itemize}
    \item The process $\X^{(e^+_1)}$ after the first jump from $e^+$ will hit $e^+$ again before returning to $\rho$. This event happens with probability $\frac{\mu_{e^+}}{1+\mu_{e^+}}\frac{\varphi(e^-)}{\varphi(e^+)}+\frac{1}{1+\mu_{e^+}}$.
    \item $G(e)>G^*(e_1)$. This event allows the process $\X^{(e_1^+)}$ after the first return to $e^+$ will hit $e_1^+$ before going to $\rho$.  
\end{itemize}
Hence,
        \begin{align}\label{case.I2I3}
\mathbb{P}_{\boldsymbol{\lambda}, \boldsymbol{\mu}}(e_1 \in \mathcal{C}_{CP}(\rho) \mid e \in \mathcal{C}_{\rm CP}(\rho), \mathcal{I}_2\cup \mathcal{I}_3) = \left( \frac{\mu_{e^+}}{1+\mu_{e^+}}\frac{\varphi(e^-)}{\varphi(e^+)}+\frac{1}{1+\mu_{e^+}} \right) \mathbb{P}_{\boldsymbol{\lambda}, \boldsymbol{\mu}}(G(e) > G^*(e_1)).
\end{align}

Combining \eqref{prob.I}-\eqref{case.I0}-\eqref{case.I1}-\eqref{case.I2I3} with \eqref{tplaw},
we obtain
\begin{align*}& \mathbb{P}_{\boldsymbol{\lambda}, \boldsymbol{\mu}}(e_1 \in \mathcal{C}_{CP}(\rho) \mid e \in \mathcal{C}_{\rm CP}(\rho)) \\
& = \left(  \frac{\varphi(e^-)}{\varphi(e^+)} \frac{\lambda_{e^+}}{\lambda_{e^+}+\deg(e^+)-1}  + \frac{1}{\lambda_{e^+}+\deg(e^+)-1}  \right.\\
& \left.+ \Big(\frac{\mu_{e^+}}{1+\mu_{e^+}}\frac{\varphi(e^-)}{\varphi(e^+)}+\frac{1}{1+\mu_{e^+}}\Big) \frac{\deg(e^+)-2}{\lambda_{e^+}+\deg(e^+)-1} \right)  \cdot \mathbb{P}_{\boldsymbol{\lambda}, \boldsymbol{\mu}}(G(e) > G^*(e_1)) \\
& = \left( 1 -\left(1 - \frac{\varphi(e^-)}{\varphi(e^+)}\right)\cdot \frac{  \lambda_{e^+} + (\deg(e^+)-2) \frac{\mu_{e^+}}{\mu_{e^+}+1}  }{ \lambda_{e^+} + \deg(e^+) - 1 }\right) \cdot \mathbb{P}_{\boldsymbol{\lambda}, \boldsymbol{\mu}}(G(e) > G^*(e_1)).
\end{align*}
By symmetry, we also obtain the formula for $\mathbb{P}_{\boldsymbol{\lambda}, \boldsymbol{\mu}}(e_1 \in \mathcal{C}_{CP}(\rho) \mid e \in \mathcal{C}_{\rm CP}(\rho))$ . Hence \eqref{condp.ej} is verified. Using the same argument, we have that  \begin{align*}
\mathbb{P}_{\boldsymbol{\lambda}, \boldsymbol{\mu}}(e_1, e_2 \in \mathcal{C}_{\rm CP}(\rho) \mid e \in \mathcal{C}_{\rm CP}(\rho ), \mathcal{I}_0)&= \frac{\varphi(e^-)}{\varphi(e^+)}\cdot \mathbb{P}_{\boldsymbol{\lambda}, \boldsymbol{\mu}}\big(G(e) > \max\{G^*(e_1), G^*(e_2)\}\big),\\
\mathbb{P}_{\boldsymbol{\lambda}, \boldsymbol{\mu}}(e_1, e_2 \in \mathcal{C}_{\rm CP }(\rho) \mid e \in \mathcal{C}_{\rm CP}(\rho ) , \mathcal{I}_1\cup \mathcal{I}_2)& =\left( \frac{\mu_{e^+}}{1+\mu_{e^+}}\frac{\varphi(e^-)}{\varphi(e^+)}+\frac{1}{1+\mu_{e^+}} \right)\\ & \cdot \mathbb{P}_{\boldsymbol{\lambda}, \boldsymbol{\mu}}\big(G(e) > \max\{G^*(e_1), G^*(e_2)\}\big),
\\
\mathbb{P}_{\boldsymbol{\lambda}, \boldsymbol{\mu}}(e_1, e_2 \in \mathcal{C}_{\rm CP }(\rho) \mid e \in \mathcal{C}_{\rm CP}(\rho ) , \mathcal{I}_3)&= \left( \frac{\mu_{e^+}}{2+\mu_{e^+}}\frac{\varphi(e^-)}{\varphi(e^+)}+\frac{2}{2+\mu_{e^+}} \right)\\
& \cdot \mathbb{P}_{\boldsymbol{\lambda}, \boldsymbol{\mu}}\big(G(e) > \max\{G^*(e_1), G^*(e_2)\}\big).
   \end{align*}
Using the law of total probability, we obtain \eqref{condp.e1e2}

\textbf{Step 2.} 
For each edge $g=(u^{-1},u)$, the quantity
$R(g) = \prod_{z \in \mathcal{P}_{u^{-1}}} \mu_z$ is seen as the resistance of $g$ associated to the process $\X$ in the non-excited regime. For each edge $g=\{u^{-1},u\} \in \mathcal{P}_e$, we define a modified resistance
\[
\widetilde{R}(g) := 2R(g) = 2 \prod_{z \in \mathcal{P}_{u^{-1}}} \mu_z,
\]
and for each edge $g \notin \mathcal{P}_e$ we keep  $\widetilde{R}(g)=R(g)$. Let $\widetilde{\boldsymbol\mu}=(\widetilde{\mu}_v)_{v\in V}$ be the bias parameters induced by $(\widetilde{R}(g))_{g\in E}$. In this step, we show that

\begin{align}\label{clock.inq}
     \mathbb{P}_{\boldsymbol{\lambda}, \boldsymbol{\mu}}\bigl(G(e) > \max\{G^{*}(e_1), G^{*}(e_2)\}\bigr) \le \mathbb{P}_{\boldsymbol{\lambda},\boldsymbol{\widetilde{\mu}}}({G}(e) > G^*(e_1)) \cdot \mathbb{P}_{\boldsymbol{\lambda},\boldsymbol{\widetilde{\mu}}}({G}(e) > G^*(e_2)).
\end{align}

In fact,
let $\widetilde{\X}$ be a ${\rm GOERW}(\boldsymbol{\lambda}, \boldsymbol{\widetilde{\mu}})$ process such that $\X$ and $\widetilde{\X}$ are constructed on the same probability space using the same collection of independent exponential clocks $\boldsymbol{\xi}$, with the difference that for edges on $\mathcal{P}_e$ the clock rates in the non-excited regime are halved according to the modified resistances. Define the quantity $\widetilde{G}(e)$ associated to $\widetilde{\X}$ in the same way as $G(e)$.
Note that $\widetilde{G}(e)$ is an exponential random variable with rate $p_e/2$. 
Hence
\begin{align}\label{tildeG}
\mathbb{P}_{\boldsymbol{\lambda},\boldsymbol{\widetilde{\mu}}}({G}(e) > G^*(e_j))=\mathbb{P}_{\boldsymbol{\lambda},\boldsymbol{\mu}}(\widetilde{G}(e) > G^*(e_j))=    \int_0^{+\infty} e^{-\frac{p_e}{2}x} f_j(x)\rmd x.
\end{align}    
Since $G(e)$ is independent of $G^{*}(e_1)$ and $G^{*}(e_2)$, 
we note that
\begin{align}\label{Gmax.inq}
    \mathbb{P}_{\boldsymbol{\lambda}, \boldsymbol{\mu}}\bigl(G(e) > \max\{G^{*}(e_1), G^{*}(e_2)\}\bigr)
 &= \int_{0}^{+\infty} \int_{0}^{+\infty}
e^{-p_e \max\{x_1, x_2\}} f_1(x_1) f_2(x_2) \rmd x_1 \rmd x_2\\
\nonumber & \le \int_0^{+\infty} \int_0^{+\infty} e^{-p_e \frac{x_1+ x_2}{2}} f_1(x_1) f_2(x_2) \rmd x_1 \rmd x_2 
\\
\nonumber &\le  \int_0^{+\infty} e^{-p_e \frac{x_1}{2}} f_1(x_1) \rmd x_1\cdot\int_0^{+\infty} e^{-p_e \frac{x_2}{2}} f_2(x_2) \rmd x_2.
\end{align}
where in the second step we use the elementary inequality $\max\{x_1, x_2\} \ge \frac{x_1 + x_2}{2}$. Combining \eqref{tildeG} and \eqref{Gmax.inq}, we obtain \eqref{clock.inq}.

 \textbf{Step 3.} In this step, we show that for $j\in \{0,1\}$, 
\begin{align}\label{coup.inq}\mathbb{P}_{\boldsymbol{\lambda},\boldsymbol{\widetilde{\mu}}}\big(e_j \in\mathcal{C}_{\rm CP}(\rho) \mid e \in\mathcal{C}_{\rm CP}(\rho)\big) \le \exp(K)\cdot  {\mathbb{P}_{\boldsymbol{\lambda},\boldsymbol{{\mu}}}\big(e_j \in\mathcal{C}_{\rm CP}(\rho) \mid e \in\mathcal{C}_{\rm CP}(\rho)\big)}
\end{align}
where $K:=2+\sup_{v\in V, |v|\ge 2}\frac{R(v^{-1},v)}{\varphi(v^{-1})}$.

 Define the function $\widetilde{\varphi}$ and $\widetilde{\psi}$ associated to $\widetilde\X$ in the same way as $\varphi$ and $\psi$. Note that for each vertex $u$ such that $e^+\le u$, we have
\[
\widetilde{\varphi}(u)=\sum_{g\in\mathcal{P}_u} \widetilde R(g)
=
\varphi(e^+) + \varphi(u).
\]
Using the fact that $\frac{1}{\mu_{u^{-1}} + 1} + \frac{\mu_{u^{-1}}}{\mu_{u^{-1}} + 1} \cdot \frac{\varphi(u^{-2})}{\varphi(u)}=\frac{\varphi(u^{-1})}{\varphi(u)}$, we deduce from \eqref{def.psi} that
\begin{align}\label{psi}
   {\psi}(u^{-1}, u) = \frac{1+ \lambda_{u^{-1}}\cdot \frac{{\varphi}(u^{-2})}{{\varphi}(u)} 
    +(\deg(u^{-1}) - 2) \cdot\frac{{\varphi}(u^{-1})}{{\varphi}(u)}}{\lambda_{u^{-1}} + \deg(u^{-1}) - 1}.
\end{align}
We also have
\begin{align}\label{psi.new}
     \widetilde{\psi}(u^{-1}, u) = \frac{1+ \lambda_{u^{-1}}\cdot \frac{\widetilde{\varphi}(u^{-2})}{\widetilde{\varphi}(u)} 
    +(\deg(u^{-1}) - 2) \cdot\frac{\widetilde{\varphi}(u^{-1})}{\widetilde{\varphi}(u)}}{\lambda_{u^{-1}} + \deg(u^{-1}) - 1}. 
\end{align}
Using Lemma \ref{phical} together with \eqref{psi}-\eqref{psi.new}, we thus obtain
\begin{align}\label{ratio.P}
 \nonumber \frac{\mathbb{P}_{\boldsymbol{\lambda},\boldsymbol{\widetilde{\mu}}}\big(e_j \in\mathcal{C}_{\rm CP}(\rho) \mid e \in\mathcal{C}_{\rm CP}(\rho)\big)}{\mathbb{P}_{\boldsymbol{\lambda},\boldsymbol{{\mu}}}\big(e_j \in\mathcal{C}_{\rm CP}(\rho) \mid e \in\mathcal{C}_{\rm CP}(\rho)\big)}
&= \frac{\frac{\mathbb{P}_{\boldsymbol{\lambda},\boldsymbol{\widetilde{\mu}}}\big(e_j \in\mathcal{C}_{\rm CP}(\rho)\big)}{\mathbb{P}_{\boldsymbol{\lambda},\boldsymbol{\widetilde{\mu}}}\big(e \in\mathcal{C}_{\rm CP}(\rho)\big)}}{\frac{\mathbb{P}_{\boldsymbol{\lambda},\boldsymbol{{\mu}}}\big(e_j \in\mathcal{C}_{\rm CP}(\rho) \big)}{\mathbb{P}_{\boldsymbol{\lambda},\boldsymbol{{\mu}}}\big(e \in\mathcal{C}_{\rm CP}(\rho) \big)}} =
\prod_{\{u^{-1},u\}\in \mathcal{P}_{e_j}\setminus\mathcal{P}_{e}}
\left(
\frac{\widetilde{\psi}(u^{-1},u)}
{\psi(u^{-1},u)}
\right)
\\
&= \prod_{\{u^{-1},u\}\in \mathcal{P}_{e_j}\setminus\mathcal{P}_{e}}\frac{1 + \lambda_{u^{-1}} \frac{\widetilde{\varphi}(u^{-2})}{\widetilde{\varphi}(u)} + (\text{deg}(u^{-1}) - 2) \frac{\widetilde{\varphi}(u^{-1})}{\widetilde{\varphi}(u)}}{1 + \lambda_{u^{-1}} \frac{\varphi(u^{-2})}{\varphi(u)} + (\text{deg}(u^{-1}) - 2) \frac{\varphi(u^{-1})}{\varphi(u)}}.
\end{align}
Using the inequality $1+x \le e^x$, we get
\begin{align}\label{exp.inq}
\nonumber  &\prod_{\{u^{-1},u\}\in \mathcal{P}_{e_j}\setminus\mathcal{P}_{e}}\frac{1 + \lambda_{u^{-1}} \frac{\widetilde{\varphi}(u^{-2})}{\widetilde{\varphi}(u)} + (\text{deg}(u^{-1}) - 2) \frac{\widetilde{\varphi}(u^{-1})}{\widetilde{\varphi}(u)}}{1 + \lambda_{u^{-1}} \frac{\varphi(u^{-2})}{\varphi(u)} + (\text{deg}(u^{-1}) - 2) \frac{\varphi(u^{-1})}{\varphi(u)}}\\
\nonumber &=   \prod_{\{u^{-1},u\}\in \mathcal{P}_{e_j}\setminus\mathcal{P}_{e}} \left( 1+\ \frac{\lambda_{u^{-1}} \left( \frac{\widetilde{\varphi}(u^{-2})}{\widetilde{\varphi}(u)} - \frac{\varphi(u^{-2})}{\varphi(u)} \right) + (\text{deg}(u^{-1}) - 2) \left( \frac{\widetilde{\varphi}(u^{-1})}{\widetilde{\varphi}(u)} - \frac{\varphi(u^{-1})}{\varphi(u)} \right)}{1 + \lambda_{u^{-1}} \frac{\varphi(u^{-2})}{\varphi(u)} + (\text{deg}(u^{-1}) - 2) \frac{\varphi(u^{-1})}{\varphi(u)}} \right) 
\\ 
&\le\exp \left( \sum_{\{u^{-1},u\}\in \mathcal{P}_{e_j}\setminus\mathcal{P}_{e}} \frac{\lambda_{u^{-1}} \left( \frac{\widetilde{\varphi}(u^{-2})}{\widetilde{\varphi}(u)} - \frac{\varphi(u^{-2})}{\varphi(u)} \right) + (\text{deg}(u^{-1}) - 2) \left( \frac{\widetilde{\varphi}(u^{-1})}{\widetilde{\varphi}(u)} - \frac{\varphi(u^{-1})}{\varphi(u)} \right)}{1 + \lambda_{u^{-1}} \frac{\varphi(u^{-2})}{\varphi(u)} + (\text{deg}(u^{-1}) - 2) \frac{\varphi(u^{-1})}{\varphi(u)}} \right).
\end{align}
Notice that
$$1 + \lambda_{u^{-1}} \frac{\varphi(u^{-2})}{\varphi(u)} + (\text{deg}(u^{-1}) - 2) \frac{\varphi(u^{-1})}{\varphi(u)}\ge  \frac{\varphi(u^{-2})}{\varphi(u)}(\lambda_{u^{-1}}+\deg(u^{-1})-1).$$
Moreover, 
\begin{align*}
    \frac{\widetilde{\varphi}(u^{-1})}{\widetilde{\varphi}(u)} - \frac{\varphi(u^{-1})}{\varphi(u)}
    &= \frac{\varphi(u^{-1}) + \varphi(e^+)}{\varphi(u) + \varphi(e^+)} - \frac{\varphi(u^{-1})}{\varphi(u)}=\frac{\varphi(e^+)R(u^{-1},u)}{\varphi(u)(\varphi(e^+)+\varphi(u))}.
\end{align*}
Similarly,
$$\frac{\widetilde{\varphi}(u^{-2})}{\widetilde{\varphi}(u)} - \frac{\varphi(u^{-2})}{\varphi(u)}
= \frac{\varphi(e^+)(R(u^{-2},u^{-1})+R(u^{-1},u))}{\varphi(u)(\varphi(e^+)+\varphi(u))}
.$$
It follows that
\begin{align*}
    &\lambda_{u^{-1}} \left( \frac{\widetilde{\varphi}(u^{-2})}{\widetilde{\varphi}(u)} - \frac{\varphi(u^{-2})}{\varphi(u)} \right) + (\text{deg}(u^{-1}) - 2) \left( \frac{\widetilde{\varphi}(u^{-1})}{\widetilde{\varphi}(u)} - \frac{\varphi(u^{-1})}{\varphi(u)} \right)\\
    & \le    \frac{\varphi(e^+)\big(R(u^{-2},u^{-1})+R(u^{-1},u)\big)}{\varphi(u)(\varphi(e^+)+\varphi(u))}.(\lambda_{u^{-1}}+\deg(u^{-1})-1).
\end{align*}
Hence
\begin{align}\label{sum.bound}
  \nonumber  &\sum_{\{u^{-1},u\}\in \mathcal{P}_{e_j}\setminus\mathcal{P}_{e}} \frac{\lambda_{u^{-1}} \left( \frac{\widetilde{\varphi}(u^{-2})}{\widetilde{\varphi}(u)} - \frac{\varphi(u^{-2})}{\varphi(u)} \right) + (\text{deg}(u^{-1}) - 2) \left( \frac{\widetilde{\varphi}(u^{-1})}{\widetilde{\varphi}(u)} - \frac{\varphi(u^{-1})}{\varphi(u)} \right)}{1 + \lambda_{u^{-1}} \frac{\varphi(u^{-2})}{\varphi(u)} + (\text{deg}(u^{-1}) - 2) \frac{\varphi(u^{-1})}{\varphi(u)}}\\
\nonumber & \le \varphi(e^+)\sum_{\{u^{-1},u\}\in \mathcal{P}_{e_j}\setminus\mathcal{P}_{e}} \frac{R(u^{-2},u^{-1})+R(u^{-1},u)}{\varphi(u^{-2})(\varphi(e^+)+\varphi(u))}\\
\nonumber & \le \varphi(e^+)\sum_{\{u^{-1},u\}\in \mathcal{P}_{e_i}\setminus\mathcal{P}_{e}} \frac{R(u^{-2},u^{-1})+R(u^{-1},u)}{\varphi(u^{-2})\varphi(u)}\\
\nonumber & = \varphi(e^+) \sum_{\{u^{-1},u\}\in \mathcal{P}_{e_j}\setminus\mathcal{P}_{e}}\left(  \frac{1}{\varphi(u^{-2})}-\frac{1}{\varphi(u)} \right) \\
 & = \varphi(e^+) \left( \frac{1}{\varphi(e^{-})}+\frac{1}{\varphi(e^+)}-\frac{1}{\varphi(e_j^{-})}-\frac{1}{\varphi(e_j^+)} \right) \le 2+\frac{R(e)}{\varphi(e^-)}\le K.
\end{align}
Combining \eqref{ratio.P}, \eqref{exp.inq} and \eqref{sum.bound}, we obtain \eqref{coup.inq}. 

\textbf{Step 4.} In this step, we finalize the proof using the results from the previous steps. 
From \eqref{coup.inq} and \eqref{condp.ej}, we have
\begin{align*}
    & {\mathbb{P}_{\boldsymbol{\lambda},\boldsymbol{{\mu}}}\big(e_1 \in\mathcal{C}_{\rm CP}(\rho) \mid e \in\mathcal{C}_{\rm CP}(\rho)\big)}      \cdot {\mathbb{P}_{\boldsymbol{\lambda},\boldsymbol{{\mu}}}\big(e_2 \in\mathcal{C}_{\rm CP}(\rho) \mid e \in\mathcal{C}_{\rm CP}(\rho)\big)}\\
    & \ge \exp(-2K)  \cdot \mathbb{P}_{\boldsymbol{\lambda},\boldsymbol{\widetilde{\mu}}}\big(e_1 \in\mathcal{C}_{\rm CP}(\rho) \mid e \in\mathcal{C}_{\rm CP}(\rho)\big) \cdot\mathbb{P}_{\boldsymbol{\lambda},\boldsymbol{\widetilde{\mu}}}\big(e_2 \in\mathcal{C}_{\rm CP}(\rho) \mid e \in\mathcal{C}_{\rm CP}(\rho)\big)\\
    & = \exp(-2K) \cdot q_e^2 \cdot  \mathbb{P}_{\boldsymbol{\lambda},\boldsymbol{\widetilde{\mu}}}\big( G(e)>G^*(e_1) \big)  \cdot \mathbb{P}_{\boldsymbol{\lambda},\boldsymbol{\widetilde{\mu}}}\big( G(e)>G^*(e_2) \big),
\end{align*}
where $$q_e:=1 - \frac{ \left(1 - \frac{\widetilde{\varphi}(e^-)}{\widetilde{\varphi}(e^+)}\right) \left( \lambda_{e^+} + (\deg(e^+)-2) \frac{\widetilde{\mu}_{e^+}}{\widetilde{\mu}_{e^+}+1} \right) }{ \lambda_{e^+} + \deg(e^+) - 1 }.$$
Using the above bound together with \eqref{clock.inq} and \eqref{condp.e1e2}, we obtain that 
\begin{align*}
   & {\mathbb{P}_{\boldsymbol{\lambda},\boldsymbol{{\mu}}}\big(e_1 \in\mathcal{C}_{\rm CP}(\rho) \mid e \in\mathcal{C}_{\rm CP}(\rho)\big)}      \cdot {\mathbb{P}_{\boldsymbol{\lambda},\boldsymbol{{\mu}}}\big(e_2 \in\mathcal{C}_{\rm CP}(\rho) \mid e \in\mathcal{C}_{\rm CP}(\rho)\big)}\\
    & \ge \exp(-2K) \cdot q_e^2 \cdot    \mathbb{P}_{\boldsymbol{\lambda}, \boldsymbol{\mu}}\bigl(G(e) > \max\{G^{*}(e_1), G^{*}(e_2)\}\bigr)\\    &  \ge \exp(-2K) \cdot\frac{q_e^2}{s_e} \cdot {\mathbb{P}_{\boldsymbol{\lambda},\boldsymbol{{\mu}}}\big(e_1, e_2 \in\mathcal{C}_{\rm CP}(\rho) \mid e \in\mathcal{C}_{\rm CP}(\rho)\big)},
\end{align*}
where
$$s_e:= 1 - \frac{ \left(1 - \frac{\varphi(e^-)}{\varphi(e^+)}\right) \left( \lambda_{e^+} + \frac{2\mu_{e^+}}{1+\mu_{e^+}} + \frac{(\deg(e^+)-3)\mu_{e^+}}{2+\mu_{e^+}}  \right) }{ \lambda_{e^+} + \deg(e^+) - 1 }.$$ 
Notice that
    $q_e\ge \frac{\varphi(e^-)}{\varphi(e^+)}\ge \frac{1}{1+R(e)/\varphi(e^-)}\ge \frac{1}{1+K}$
and that 
    $s_e\le 1$.
Hence
\begin{align*}
&{\mathbb{P}_{\boldsymbol{\lambda},\boldsymbol{{\mu}}}\big(e_1, e_2 \in\mathcal{C}_{\rm CP}(\rho) \mid e \in\mathcal{C}_{\rm CP}(\rho)\big)}\\
& \le M \cdot {\mathbb{P}_{\boldsymbol{\lambda},\boldsymbol{{\mu}}}\big(e_1 \in\mathcal{C}_{\rm CP}(\rho) \mid e \in\mathcal{C}_{\rm CP}(\rho)\big)}      \cdot {\mathbb{P}_{\boldsymbol{\lambda},\boldsymbol{{\mu}}}\big(e_2 \in\mathcal{C}_{\rm CP}(\rho) \mid e \in\mathcal{C}_{\rm CP}(\rho)\big)}.
\end{align*}
with $M=(1+K)^2\exp(2K)$. Hence the ruin percolation is quasi-independent.
\end{proof}

\begin{remark}\label{rem2}
The proof of quasi-independence for the percolation associated with the homogeneous OERW in Section 9.1 of \cite{H2018} appears to be incomplete, with several steps requiring further justification. In particular, formula (9.28) in \cite{H2018} does not appear to be accurate as stated and can be corrected using \eqref{ratio.P} from our proof. Moreover, our correlation inequality for the percolation is more general and stronger than the one in \cite{H2018}, and our result does not require the tree to have bounded degree.
\end{remark}

\subsection{Criteria of transience and recurrence for GOERW}\label{sec:RT}

The results of Section \ref{sec:quasi} allow one to prove Theorem \ref{RT.thm} by adapting the argument for once-reinforced random walks from Section 8 in \cite{CKS2020}. In this section, we instead provide a shorter and more direct proof.

Recall from Proposition \ref{phical} that $\P_{\boldsymbol{\lambda}, \boldsymbol{\mu}}(\rho\leftrightarrow e^+)=\Psi(e)$ and 
$$\P_{\boldsymbol{\lambda}, \boldsymbol{\mu}}(e \text{ is closed} \mid \rho\leftrightarrow e^-)=\frac{\P_{\boldsymbol{\lambda}, \boldsymbol{\mu}}( \rho\leftrightarrow e^-)-\P_{\boldsymbol{\lambda}, \boldsymbol{\mu}}(\rho\leftrightarrow e^+)}{\P_{\boldsymbol{\lambda}, \boldsymbol{\mu}}( \rho\leftrightarrow e^-)}=1-\psi(e).$$
Hence 
$$c(e)=1 \text{ for $|e|=1$} \text{ and  \ \ } c(e)=\frac{1}{1 - \psi(e)} \Psi(e) \text{ for $|e|\ge 2$} $$ are the adapted conductances to the ruin percolation according to the definition \eqref{adt.c}.
Recall that
\[
RT(\Tcal, \boldsymbol{\lambda},\boldsymbol{\mu}) = \sup \left\{ \gamma > 0 : \inf_{\pi \in \Pi} \sum_{e \in \pi} (\Psi(e))^{\gamma} > 0 \right\}.
\]

    \begin{proof}[Proof of \Cref{RT.thm}]
\textbf{Case 1: $RT(\Tcal, \boldsymbol{\lambda},\boldsymbol{\mu})> 1$}.\\ Choose $\gamma \in \big(1, RT(\Tcal, \boldsymbol{\lambda},\boldsymbol{\mu}) \big)$.
By definition, we have
$\inf_{\pi\in \Pi} \sum_{e\in \pi} \big(\Psi(e)\big)^{\gamma}>0.$
By the max-flow min-cut Theorem, there exists a non-zero flow $(\theta(e))_{e\in E}$ such that
$\theta(e)\le \big(\Psi(e)\big)^{\gamma}$.
Hence $$\sum_{e\in\mathcal{P}_v}\frac{\theta(e)}{c(e)}\le\sum_{e\in \mathcal{P}_v} (1-\psi(e))\prod_{g\le e}(\psi(g))^{\gamma-1}.$$
By Proposition 8.5 in \cite{CKS2020}, the left-hand side is uniformly bounded for $v\in V$. Hence $\sup_{v\in V}\sum_{e\in\mathcal{P}_v}\frac{\theta(e)}{c(e)}<\infty$.
We note that (see, e.g., Proposition 16.1 in \cite{LP2016}) $$\mathcal{E}(\theta):=\sum_{e\in E}\frac{\theta(e)^2}{c(e)}=\int_{\partial \Tcal} V_{\theta}(\xi) \rmd m_{\theta}(\xi)$$
where $V_{\theta}(\xi):=\sum_{e\in \xi}\frac{\theta(e)}{c(e)}$ for $\xi\in\partial \Tcal$, and $m_{\theta}$ is the harmonic measure induced by flow $\theta$ on  $\partial \Tcal$. 
As $V_{\theta}$ is  bounded in ${\partial}\Tcal$, we have $\mathcal{E}(\theta)<\infty$. On the other hand, by Lemma \ref{per1.lem}, the ruin percolation is quasi-independent. Hence, by Proposition \ref{lyons1989}(ii), we have $$\P_{\boldsymbol{\lambda}, \boldsymbol{\mu}}(\rho\leftrightarrow\infty)=\P_{\boldsymbol{\lambda}, \boldsymbol{\mu}}(|\mathcal{C}_{\rm CP}(\rho)|=\infty)>0.$$ 
Combining this fact with Lemma \ref{per2.lem}, we conclude that the process $\X$ is transient. \\

\textbf{Case 2: $RT(\Tcal, \boldsymbol{\lambda},\boldsymbol{\mu}) < 1$}.\\
By definition, we have
\[
\inf_{\pi \in \Pi} \sum_{e \in \pi} \mathbb{P}_{\boldsymbol{\lambda}, \boldsymbol{\mu}}(\rho \leftrightarrow e^+)
= \inf_{\pi \in \Pi} \sum_{e \in \pi} \Psi(e)
= 0.
\]
Using Proposition \ref{lyons1989}(i), we conclude that
$\mathbb{P}_{\boldsymbol{\lambda}, \boldsymbol{\mu}}(\rho \leftrightarrow \infty) = \P_{\boldsymbol{\lambda}, \boldsymbol{\mu}}(|\mathcal{C}_{\rm CP}(\rho)|=\infty)=0.$ Hence, 
$\mathbf{X}$ is recurrent.
\end{proof}

    \section{Once-excited random walks in random environment}\label{sec:main}

    Throughout this section we assume that for all $v\in V\setminus\{\rho\}$,
    $$\mu_{v}=1 \text{ and }\lambda_v=1+\alpha_v \deg(v),$$
    where $(\alpha_v)_{v\in V\setminus\{\rho\}}$ are i.i.d. non-negative random variables with $m=\E[1/(\alpha_v+1)].$ We will show that $\Psi(e)$ concentrates around $|e|^{-2+m\pm\eps}$ with high probability. Using the Borel-Cantelli lemma, we infer that there is a sequence of cutsets $(\pi_n)_{n\ge0}$ such that $\Psi(e)\le |e|^{-2+m+\eps}$ a.s. for each edge $e\in \bigcup_{n\ge0} \pi_{n}$. Combining this fact and Theorem \ref{RT.thm}, we obtain the recurrence case of Theorem \ref{main.thm}. In order to demonstrate the transience case of Theorem \ref{main.thm}, we use the quasi-independent percolation technique to show that, with positive probability, there is an infinite subtree $\widetilde{\Tcal}$ containing the root such that $\Psi(e)\ge \kappa^{-1} |e|^{-2+m-\eps}$, with $\kappa>0$, for each edge $e$ of $\widetilde{\Tcal}$ and $\text{br}_{r}(\widetilde{\Tcal})>\text{br}_{r}(\Tcal)-2\eps$. This fact, together with Theorem \ref{RT.thm}, yields the transience case.

    Set $\kappa:= 1+\max_{v\sim \rho} \alpha_v$.
    Fix $\eps>0$ and
    let $$\mathcal{B}_e:=\Big\{\kappa^{-1}|e|^{-2+m-\eps} \le \Psi(e)\le |e|^{-2+m+\eps}\Big\}.$$
   \begin{lemma}\label{lem.B} We have
    $$ \P( \mathcal{B}_e^c)\le  K_1 \exp(- K_2(\log |e|)^2),$$
    where $K_1$ and $K_2$ are some positive constants depending only on $\eps$.
    \end{lemma}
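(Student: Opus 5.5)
The plan is to compute $\psi$ explicitly in this special environment, which turns $\log\Psi(e)$ into a weighted sum of i.i.d. bounded variables along $\mathcal{P}_e$. The bound then follows from a Hoeffding-type concentration inequality. Since $\Psi(e)$ is a deterministic function of the environment, $\P(\mathcal{B}_e^c)=\mathbf{P}(\mathcal{B}_e^c)$, and only the law of $(\alpha_v)$ matters.

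\textbf{Step 1: explicit formula.} With $\mu\equiv 1$ we have $R\equiv 1$ and $\varphi(x)=|x|$. Take $\{u^{-1},u\}$ with $|u|=k\ge 2$, and write $d=\deg(u^{-1})$ and $\lambda_{u^{-1}}=1+\alpha d$ with $\alpha=\alpha_{u^{-1}}$. Then \eqref{def.psi} gives
\[
1-\psi(u^{-1},u)=\frac{2}{k}\cdot\frac{1+\alpha d+\frac{d-2}{2}}{\alpha d+d}=\frac{1}{k}\Bigl(2-Y_{u^{-1}}\Bigr),\qquad Y_w:=\frac{1}{1+\alpha_w}\in(0,1].
\]
The degree cancels exactly; this is the point of the choice $\lambda_v=1+\alpha_v\deg(v)$. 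Let $u_1,\dots,u_{n-1}$ be the vertices of $\mathcal{P}_e$ at depths $1,\dots,n-1$, where $n=|e|$, and set $Y_j:=Y_{u_j}$. These are i.i.d. with mean $m$, and
\[
\Psi(e)=\prod_{k=2}^{n}\Bigl(1-\frac{2-Y_{k-1}}{k}\Bigr).
\]

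\textbf{Step 2: isolate the first factor and linearize.} The $k=2$ factor equals $Y_1/2$. Because $u_1\sim\rho$, we have $Y_1\ge 1/\kappa$, so this factor lies in $[1/(2\kappa),1/2]$; this is exactly why $\kappa$ enters the lower bound. For $k\ge 3$ the factor lies in $[1/3,1]$, so $\log(1-x)=-x+O(x^2)$ with $x\le 2/k$ gives
\[
\log\Psi(e)=\log\frac{Y_1}{2}-\sum_{k=3}^n\frac{2-Y_{k-1}}{k}+O(1),
\]
with a deterministic $O(1)$ (bounded by a multiple of $\sum 1/k^2$). Next use $\sum_{k=3}^n(2-m)/k=(2-m)\log n+O(1)$. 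It follows that
\[
\log\Psi(e)=-(2-m)\log n+\log Y_1+S_n+O(1),\qquad S_n:=\sum_{k=3}^n\frac{Y_{k-1}-m}{k}.
\]

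\textbf{Step 3: concentration.} $S_n$ is a sum of independent centered variables with ranges of length at most $1/k$, and $\sum_k k^{-2}\le\pi^2/6$. Hoeffding's inequality therefore gives
\[
\mathbf{P}\bigl(|S_n|>\tfrac{\eps}{2}\log n\bigr)\le 2\exp\bigl(-c\,\eps^2(\log n)^2\bigr)
\]
for an absolute constant $c>0$.

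\textbf{Step 4: conclude.} On the complementary event, for $n\ge n_0(\eps)$ the additive $O(1)$ and the term $\log\tfrac12$ are absorbed into the slack $\tfrac{\eps}{2}\log n$. Using $\log Y_1\le 0$ gives the upper bound $\Psi(e)\le n^{-2+m+\eps}$. Using $\log Y_1\ge-\log\kappa$ gives the lower bound $\Psi(e)\ge\kappa^{-1}n^{-2+m-\eps}$. For the finitely many $n<n_0(\eps)$, enlarging $K_1$ makes the bound trivial. Hence $\P(\mathcal{B}_e^c)\le K_1\exp(-K_2(\log|e|)^2)$ with $K_2=c\eps^2$.

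The main obstacle is the $k=2$ term. Since $Y_1$ can be arbitrarily small, no deterministic bound on $\Psi(e)$ from below is possible, so this term has to be handled through the definition of $\kappa$ rather than through concentration. Everything else is routine bookkeeping of $O(1)$ terms.
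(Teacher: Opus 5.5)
Your proposal is correct and follows essentially the same route as the paper. Both arguments use the explicit formula $1-\psi(v^{-1},v)=\frac{1}{|v|}\bigl(2-\frac{1}{1+\alpha_{v^{-1}}}\bigr)$, isolate the depth-two factor $\frac{1}{2(1+\alpha_{v_1(e)})}$ and control it through $\kappa$, linearize $\log(1-x)$ with a summable quadratic error, and apply Hoeffding's inequality with $t=\frac{\varepsilon}{2}\log|e|$ and $\sum k^{-2}\le\pi^2/6$. Your remark that the finitely many small values of $|e|$ are absorbed by enlarging $K_1$ makes explicit a point the paper leaves implicit when it says ``for large enough $|e|$''.
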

    \begin{proof}
    Substituting 
         $\mu_{v}=1$ for all $v\in V\setminus\{\rho\}$ into \eqref{def.psi}, we have
    $$\psi(v^{-1},v)=1-\frac{2(\lambda_{v^{-1}}-1)+\deg({v^{-1}})}{|v|(\lambda_{v^{-1}}-1+\deg({v^{-1}}))} \text{ for $|v|\ge 2$.}$$
    As $\lambda_v=1+\alpha_v \deg(v)$, we obtain
    $$\psi(v^{-1},v)=1-\frac{1}{|v|}\frac{2\alpha_{v^{-1}}+1}{\alpha_{{v^{-1}}}+1},$$
  {for $|v|\ge 2$ and $\psi(\rho,v)=1$ for $v\sim \rho$.}
    Define $C_v:=\frac{2\alpha_{v^{-1}}+1}{\alpha_{v^{-1}}+1}$. We obtain
    $$\Psi(e)=\prod_{g \in \Pcal_e} \psi(g) = \frac{1}{2(1+\alpha_{v_1(e)})} \prod_{v\in\mathcal{P}_e, |v|\ge 3} \left( 1 - \frac{C_v}{|v|} \right),$$
   where $v_1(e)$ is the unique vertex on $\mathcal{P}_e$ such that $v_1(e)\sim \rho$. Hence
    $$\log(\Psi(e)) = \sum_{v\in\mathcal{P}_e, |v|\ge 3} \log\Big(1 - \frac{C_v}{|v|}\Big)- \log\left(2(1+\alpha_{v_1(e)})\right).$$
    Using the inequality $\log(1 - x) \le -x$ for $x \in (0, 1)$ with 
    $x=\frac{C_v}{|v|} \in (0,1) \text{ for } |v| \ge 3$, we obtain
    $$\log(\Psi(e)) \le -\sum_{v\in\mathcal{P}_e, |v|\ge 3} \frac{C_v}{|v|}.$$
    Using the inequality $\log(1 - x) \ge -x - \frac{x^2}{1-x}$ with $x=\frac{C_v}{|v|} \in (0,1)$ for $|v| \ge 3$, we get
    $$
    \log(\Psi(e)) \ge -\sum_{v\in\mathcal{P}_e, |v|\ge 3} \frac{C_v}{|v|} -   \sum_{v\in\mathcal{P}_e, |v|\ge 3}\frac{(\frac{C_v}{|v|})^2}{1-\frac{C_v}{|v|}}- \log\left(2\kappa\right),
    $$
    where we recall that $\kappa= 1+\max_{v\sim \rho} \alpha_v$.
  The second sum in the right hand side is bounded by a constant. In fact, since $C_v \in [1,2]$ and $|v| \ge 3$, we have
    $$
    \sum_{v\in\mathcal{P}_e, |v|\ge 3}\frac{(\frac{C_v}{|v|})^2}{1-\frac{C_v}{|v|}} \le \sum_{v\in\mathcal{P}_e, |v|\ge 3} \frac{\frac{4}{|v|^2}}{1-\frac{2}{|v|}}
    \le  \sum_{v\in\mathcal{P}_e, |v|\ge 3} \frac{\frac{4}{|v|^2}}{\frac{1}{3}}
    = 12\sum_{n=3}^{|e|} \frac{1}{n^2}\le 12\sum_{n=1}^{\infty}\frac{1}{n^2}=2\pi^2.
    $$
    Therefore
    \begin{align}\label{R.E1}
    - S_e -  2\pi^2-\log\left(2\kappa\right) \le  \log(\Psi(e)) \le -S_e,
    \end{align}
in which we set $$S_e:=\sum_{v\in\mathcal{P}_e, |v|\ge 3}\frac{C_v}{|v|}.$$ 
Applying Hoeffding's inequality (see Proposition \ref{Hoeffding}) to the random variables $\frac{C_v}{|v|}  \in \big[\frac{1}{|v|},\frac{2}{|v|}\big]$ for $v\in \mathcal{P}_e$ with $|v|\ge 3$, we have
    $$\P \big(|S_e-\E[S_e]|\geq t\big)\leq 2\exp \left(-{\frac {2t^{2}}{\sum_{v\in\mathcal{P}_e, |v|\ge 3}\frac{1}{|v|^2}}}\right).$$
Choosing $t=\frac{\eps}{2}\log|e|$ and using the fact that
    $$\sum_{v\in\mathcal{P}_e, |v|\ge 3}\frac{1}{|v|^2} \le \sum_{k=1}^{\infty}\frac{1}{k^2}=\frac{\pi^2}{6},$$
we obtain
    \begin{align}\label{Hoeffding1}
       \P \big(|S_e-\E[S_e]|\geq \frac{\eps}{2}\log|e| \big)\leq 2\exp\left(
    -\frac{3\varepsilon^{2}}{\pi^{2}}(\log|e|)^{2} \right).
    \end{align}
On the other hand, we notice that
    $$\E[S_e]=\sum_{v\in\mathcal{P}_e, |v|\ge 3}\frac{\E[C_v]}{|v|}=\sum_{v\in\mathcal{P}_e, |v|\ge 3}\frac{\E[2-\frac{1}{\alpha_{v^{-1}}+1}]}{|v|}=(2-m)\sum_{k=3}^{|e|}\frac{1}{k}.$$
    Using the harmonic series bound $\log(n) \le \sum_{k=1}^n \frac{1}{k} \le 1+\log(n)$, we have
    \begin{align}\label{Ex1}
        (2-m) \left(\log|e|-\frac{3}{2}\right) \le \E[S_e] \le (2-m) \log|e|.
    \end{align}
It follows from (\ref{R.E1}) and (\ref{Ex1}) that, on the event $\{\Psi(e) < \kappa^{-1}|e|^{-2+m-\eps}\}$, we have
\begin{equation}\label{lb}
    \begin{aligned}
        S_e-\E[S_e] & \ge (2-m+\eps)\log|e|-2\pi^2- \log(2) -(2-m) \log|e|\\
        &=\eps \log|e|-2\pi^2- \log(2).
    \end{aligned}
\end{equation}
   Similarly, it follows from (\ref{R.E1}) and (\ref{Ex1}) that, on the event $\{\Psi(e) > |e|^{-2+m+\eps}\}$, we have
   \begin{align}\label{ub}
       S_e-\E[S_e] \le (2-m-\eps)\log|e|-(2-m) \left(\log|e|-\frac{3}{2}\right)\le -\eps \log|e|+3.
   \end{align}
Combining (\ref{lb}) and (\ref{ub}), we obtain that
\begin{align*}
    \mathcal{B}_e^{c}& =  \big\{\Psi(e) < \kappa^{-1} |e|^{-2+m-\eps}\big\}\cup\{ \Psi(e) > |e|^{-2+m+\eps}\big\}
    \\ & \subset \{  S_e-\E[S_e] \ge  \eps \log|e|-2\pi^2- \log(2)\}\cup \{  S_e-\E[S_e] \le -\eps \log|e|+3\}\\
    & \subset \Big\{|S_e-\E[S_e]|\geq \frac{\eps}{2}\log|e|\Big\}
\end{align*} 
for large enough $|e|$.
Using the above inclusion and \eqref{Hoeffding1}, we deduce that
    $$
    \begin{aligned}
        \P(\mathcal{B}_e^{c}) 
        & \le \P \Big(|S_e-\E[S_e]|\geq \frac{\eps}{2}\log|e| \Big) \le 2\exp\left(
    -\frac{3\varepsilon^{2}}{\pi^{2}}(\log|e|)^{2} \right)
    \end{aligned}
$$
for large enough $|e|$. This ends the proof of the lemma. 
    \end{proof}
\begin{lemma}\label{lem.Psi.bound.new}
    For each $\eps>0$ and $b>{\rm br}_r(\Tcal)$, there exists a.s. a sequence of cutsets $(\pi_n)_{n\ge 0}$ such that $$\sum_{e\in\pi_n}|e|^{-b}<\exp(-n), \text{ for each } n\ge 0$$ and 
 $$\kappa^{-1}|e|^{-2+m-\eps} \le \Psi(e)\le |e|^{-2+m+\eps} \text{ for all } e\in \bigcup_{n\ge0} \pi_n.$$
\end{lemma}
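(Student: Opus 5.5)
We argue by a Borel--Cantelli argument over a deterministic family of cutsets that is chosen in advance and is very sparse in the weight $|e|^{-b}$. Fix $\eps>0$ and $b>{\rm br}_r(\Tcal)$, and choose $b'\in({\rm br}_r(\Tcal),b)$. Since $b'>{\rm br}_r(\Tcal)$, the definition \eqref{brr2} gives $\inf_{\pi\in\Pi}\sum_{e\in\pi}|e|^{-b'}=0$. For each $n\ge0$ we can therefore pick a deterministic cutset $\pi_n$ with $\sum_{e\in\pi_n}|e|^{-b'}<e^{-n}$. We may also require every edge of $\pi_n$ to satisfy $|e|\ge n$ (or any prescribed level). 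To see this, if a cutset has small $b'$-sum, replace each edge $e$ with $|e|<N$ by the children edges at depth $N$ below it. This could increase the sum, so instead note the following: for $\inf=0$, cutsets with small sum must avoid low-level edges, because an edge at level $k$ alone contributes $k^{-b'}$. Hence a cutset with sum $<\delta$ has all edges at level $\ge \delta^{-1/b'}$. In particular, with $\delta=e^{-n}$ the minimal level $L_n$ of $\pi_n$ tends to infinity, indeed $L_n\ge e^{n/b'}$. Since $|e|^{-b}\le |e|^{-b'}$, the first required property $\sum_{e\in\pi_n}|e|^{-b}<e^{-n}$ holds automatically.

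Next we bound the probability that some edge of some $\pi_n$ is bad. By Lemma \ref{lem.B}, $\P(\mathcal{B}_e^c)\le K_1\exp(-K_2(\log|e|)^2)$. For any fixed $c>0$ this is at most $C_c|e|^{-c}$, where $C_c$ is a constant, because $(\log|e|)^2$ eventually dominates $c\log|e|$. For $e\in\pi_n$ we have $|e|\ge L_n$, so
\[
\P\Big(\bigcup_{e\in\pi_n}\mathcal{B}_e^c\Big)\le \sum_{e\in\pi_n}K_1 e^{-K_2(\log|e|)^2}\le \sum_{e\in\pi_n}|e|^{-b'}\cdot K_1\sup_{\ell\ge L_n}\ell^{b'}e^{-K_2(\log \ell)^2}.
\]
The supremum tends to $0$ as $L_n\to\infty$, and the first factor is below $e^{-n}$, so the right-hand side is summable in $n$. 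By Borel--Cantelli, almost surely only finitely many $n$ have a bad edge in $\pi_n$. Let $n_0$ be such that for all $n\ge n_0$ every $e\in\pi_n$ lies in $\mathcal{B}_e$.

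Finally we reindex to obtain a sequence that works from $n=0$. Set $\pi'_n:=\pi_{n+n_0}$. Then $\sum_{e\in\pi'_n}|e|^{-b}<e^{-n-n_0}\le e^{-n}$, and every edge of $\bigcup_n\pi'_n$ satisfies the two-sided bound defining $\mathcal{B}_e$. Note that $\kappa$ is random but fixed for the realization, and it enters $\mathcal{B}_e$ exactly as in Lemma \ref{lem.B}, so no issue arises.

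The main obstacle is that Lemma \ref{lem.B} is stated for $|e|$ large, with constants that implicitly absorb small levels. This is why we need the minimal level of $\pi_n$ to diverge, and the observation that small $b'$-sum forces large minimal level handles it. The bound $\P(\mathcal{B}_e^c)\le K_1\exp(-K_2(\log|e|)^2)$ is superpolynomial, so it beats any polynomial growth. This lets the union bound over $\pi_n$ be controlled by the $b'$-sum directly, without needing any bound on the cardinality of $\pi_n$.
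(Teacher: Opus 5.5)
Your proposal is correct and follows essentially the paper's argument: fix deterministic cutsets $\pi_n$ with small $|e|^{-b}$-sum, control $\P\bigl(\bigcup_{e\in\pi_n}\mathcal{B}_e^c\bigr)$ by a union bound using the superpolynomial decay in Lemma \ref{lem.B}, apply Borel--Cantelli, and discard finitely many indices. The auxiliary exponent $b'$ is unnecessary, since you could work with $b$ directly. Your explicit remark that every edge of $\pi_n$ has level exceeding $e^{n/b'}$, together with the final reindexing $\pi'_n=\pi_{n+n_0}$, makes precise two points the paper uses only tacitly.
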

\begin{proof}

    Recall that the  branching-ruin number of $\Tcal$ is given by
    $$\text{br}_r(\mathcal{T}) = \sup\left\{\gamma > 0: \inf_{\pi\in \Pi}\sum_{e\in\pi}|e|^{-\gamma} > 0\right\}.$$
    Fix $b>\text{br}_r(\mathcal{T})$, then 
    $$\inf_{\pi\in \Pi}\sum_{e\in\pi}|e|^{-b}=0.$$
    In particular, for each $n$ there exists a cutset $\pi_n$ such that
    $$\sum_{e\in\pi_n}|e|^{-b}<\exp(-n).$$
    In virtue of Lemma \ref{lem.B}, we have
    $$ \P( \mathcal{B}_e^c)\le K_1|e|^{-K_2\log|e|} < |e|^{-b},$$
    for all $|e| \ge N$ with some large enough integer $N$. Let 
    $$\mathcal{V}_n= \bigcap_{e\in \pi_n } \mathcal{B}_e.$$
    Notice that for $n\ge N$,
    $$\P(\mathcal{V}_n^c)=\P\left(\bigcup_{e\in \pi_n}\mathcal{B}_e^c\right)\le \sum_{e\in \pi_n}\P(\mathcal{B}_e^c) < \sum_{e\in \pi_n}|e|^{-b}<\exp(-n).$$
    Hence
    $$\sum_{n=N}^{\infty}\P(\mathcal{V}_n^c) < \sum_{n=N}^{\infty}\exp(-n)< \infty. $$
    By the first Borel–Cantelli lemma, we infer that the events $(\mathcal{V}_n^c)_{n\ge 1}$ occur finitely often almost surely.
    Hence, almost surely there exists a large enough $n_0 \in \N$ such that, the event $\mathcal{V}_n$ holds for all $n \ge n_0$.
    Equivalently, the event $\mathcal{B}_e$ holds for all edges $e\in \bigcup_{n\ge n_0}\pi_{n}$.
\end{proof}

    \begin{proof}[Proof of \Cref{main.thm}]

We first verify the conditions of Theorem \ref{RT.thm} for the case when $\mu_{v}=1$ for all $v\in V$ and $\lambda_v=1+\alpha_v \deg(v),$
    where $(\alpha_v)_{v\in V}$ are i.i.d. non-negative random variables. 
As $\mu_v = 1$ for all $v$, we have $R(v^{-1},v)=\prod_{\{u^{-1}, u\}\in \mathcal{P}_{v^{-1}}} \mu_{u}=1$ and thus $$\varphi(x) = \sum_{e \in \mathcal{P}_x} R(e)  = |x|.$$
  Furthermore
         $$\begin{aligned}\sup_{v\in V, |v|\ge 2} \frac{R(v^{-1},v)}{\varphi(v^{-1})}=\sup_{v\in V, |v|\ge 2}\frac{1}{|v|-1}=1.
\end{aligned}$$

We  consider the following two distinguished cases:

\textbf{a. When ${\rm br}_r(\Tcal)<2-m$.}

    Fix $\gamma\ge 1$.
    Let $\eps>0$ be sufficiently small such that $2-m-\eps >\text{br}_r(\mathcal{T})$. Let $b=(2-m-\eps)\gamma$. Note that $b> \text{br}_r(\mathcal{T}).$
    In virtue of Lemma \ref{lem.Psi.bound.new}, there exists a.s. a sequence of cutsets $(\pi_n)_{n\ge 0}$ such that
    $$\sum_{e\in\pi_n}|e|^{-(2-m-\eps)\gamma}<\exp(-n) \text{ for each  $n\ge0$, and  }
    \Psi(e)\le |e|^{-2+m+\eps} \text{ for all $e\in \bigcup_{n\ge0} \pi_n$.}$$
   It follows that a.s.
    $$\sum_{e \in \pi_n}\Psi(e)^{\gamma}\le \sum_{e \in \pi_n}|e|^{-(2-m-\eps)\gamma}<\exp(-n).$$
We thus have
    $$\inf_{\pi \in \Pi} \sum_{e \in \pi}\Psi(e)^{\gamma} \le \inf_{n \ge 0} \sum_{e \in \pi_n}\Psi(e)^{\gamma} = 0,$$
 {for all $\gamma\ge 1$}. Consequently, by the definition of $RT(\Tcal,\boldsymbol{\lambda},\boldsymbol{\mu})$, we must have that a.s. $RT(\Tcal, \boldsymbol{\lambda},\boldsymbol{\mu})<1.$
    Using Theorem \ref{RT.thm}, we infer that, under the annealed measure $\P$, the walk is a.s. recurrent.

     \textbf{ b. When ${\rm br}_r(\Tcal)>2-m$.}
     

    Fix $\varepsilon>0$. We define a percolation on $\Tcal$ by declaring an edge 
    $e$ to be open if the event $$\mathcal{B}_e=\Big\{\kappa^{-1}|e|^{-2+m-\eps} \le \Psi(e)\le |e|^{-2+m+\eps}\Big\}$$ holds, and closed otherwise. Removing all closed edges, the open edges form connected clusters. We denote by $\widetilde{\mathcal{T}}$ the cluster containing the root $\rho$, which is clearly a tree. We first prove that there exists a constant $p_0\in (0,1)$ such that for all edge $e=\{e^-, e^+\}$ with $e^-\le e^+$,
  \begin{align}\label{open.path}
      \P(\rho \leftrightarrow e^+)\ge p_0. 
  \end{align}
Indeed, fix a sufficiently large positive integer $n_0$. For each $e$ with $|e|\ge n_0$, choose $e_0\in\mathcal{P}_e$ with $|e_0|=n_0$. We notice that
    $$\P(\rho\leftrightarrow e^+)=\P(\rho\leftrightarrow e^+, \rho\leftrightarrow e^+_0)=\P(\rho\leftrightarrow e^+_0)\P(\rho\leftrightarrow e^+ \mid \rho\leftrightarrow e^+_0). $$
    Note that
    $$\P(\rho\leftrightarrow e^+ \mid \rho\leftrightarrow e^+_0)=\P\left(\bigcap_{g\in \mathcal{P}_{e}\setminus \mathcal{P}_{e_0}} \mathcal{B}_g\right)=1-\P\left(\bigcup_{g\in \mathcal{P}_{e}\setminus \mathcal{P}_{e_0}} \mathcal{B}_g^c\right).$$
   Using the union bound and Lemma \ref{lem.B}, we have 
    $$\P\left(\bigcup_{g\in \mathcal{P}_{e}\setminus \mathcal{P}_{e_0}} \mathcal{B}_g^c\right) \le \sum_{g\in \mathcal{P}_{e}\setminus \mathcal{P}_{e_0}}\P(\mathcal{B}_g^c)\le K_1\sum_{n\ge n_0} \exp(-K_2\log(n)^2).$$
    Therefore
    $$\P(\rho\leftrightarrow e^+ \mid \rho\leftrightarrow e^+_0) \ge 1 - K_1\sum_{n\ge n_0} \exp(-K_2\log(n)^2).$$
Since $\sum_{n\ge n_0} \exp(-K_2\log(n)^2)\to 0$ as $n_0\to \infty$. We choose $n_0$ sufficiently large such that
    $$\sum_{n\ge n_0} \exp(-K_2\log(n)^2) \le \frac{1}{2K_1}.$$
Hence 
    \begin{align*}
        \P(\rho\leftrightarrow e^+)\ge \frac{1}{2}\P(\rho\leftrightarrow e^+_0)\ge \frac{1}{2}\min_{ |e_0|=n_0}\P(\rho\leftrightarrow e_0^+),
    \end{align*}
    for all edge $e\in E$ such that $|e|\ge n_0$. This verifies \eqref{open.path}. 

We next prove that the above-mentioned percolation is quasi-independent. By \eqref{open.path}, we notice that for all vertices $x$ and $y$, 
    $$\mathbb{P}(\rho\leftrightarrow x \mid \rho\leftrightarrow x\wedge y ) = \frac{\mathbb{P}(\rho\leftrightarrow x, \rho\leftrightarrow x\wedge y)}{\mathbb{P}(\rho\leftrightarrow x\wedge y)}= \frac{\mathbb{P}(\rho\leftrightarrow x)}{\mathbb{P}(\rho\leftrightarrow x\wedge y)}\ge p_0.$$
    Similarly, we obtain that
    $$\mathbb{P}(\rho\leftrightarrow y \mid \rho\leftrightarrow x\wedge y ) \ge p_0.$$
   Therefore,
    $$\mathbb{P}(\rho\leftrightarrow x, \rho\leftrightarrow y \ |\ \rho\leftrightarrow x\wedge y ) \le 1\le \frac{1}{p_0^2}\mathbb{P}(\rho\leftrightarrow x \ |\ \rho\leftrightarrow x\wedge y )\mathbb{P}(\rho\leftrightarrow y \ |\ \rho\leftrightarrow x\wedge y ).$$
    Hence, the percolation is quasi-independent.
    In virtue of Lemma \ref{lem.B},
    $$\P(e \text{ is closed}) \le K_1 \exp(-K_2(\log|e|)^2).$$
    It follows that
    $$\P(e \text{ is open})\ge 1-\frac{\eps}{|e|},$$
    for all $|e|\ge n_1$, where $n_1$ is some sufficiently large positive integer. Using Proposition 4.4. in \cite{CHK2019} with the fact that the percolation is quasi-independent and  $\P(e \text{ is open}) \ge 1-\frac{\eps}{|e|}$ for all $|e|\ge n_1$, we deduce that
   \begin{align}\label{brr.inq}
      \P\big( \text{$\widetilde{\mathcal{T}}$ is infinite and $\text{br}_r(\widetilde{\Tcal})>\text{br}_r({\Tcal})-2\eps$}\big)>0.
   \end{align}
   
Assuming $\widetilde{\Tcal}$ is infinite, we denote by $\Pi_{\widetilde{\mathcal{T}}}$ the set of all cutsets of $\widetilde{\mathcal{T}}$.
       For each $\pi\in\Pi_T$, there is a cutset $\widetilde\pi\in\Pi_{\widetilde{\mathcal{T}}}$ where $\widetilde\pi$ consists only of open edges $\{v^{-1},v\}$ of $\pi$ with $v\in \widetilde{\Tcal}$. 
     For $\gamma>0$, we notice that $$\sum_{e \in \pi}\Psi(e)^{\gamma} \ge \sum_{e \in \widetilde{\pi}}\Psi(e)^{\gamma}.$$
    Hence, on the event $\widetilde{\mathcal{T}}$ is infinite, we have that for each $\gamma>0$,
    $$\inf_{\pi \in \Pi} \sum_{e \in \pi}\Psi(e)^{\gamma} \ge \inf_{\widetilde\pi \in \Pi_{\widetilde{\mathcal{T}}}} \sum_{e \in \widetilde{\pi}}\Psi(e)^{\gamma} \ge \kappa^{-1}\inf_{\widetilde\pi \in \Pi_{\widetilde{\mathcal{T}}}}\sum_{e \in \widetilde{\pi}} |e|^{-(2-m+\eps)\gamma},$$
    where in the second inequality we use the fact that $\mathcal{B}_e$ holds for all $e\in \widetilde{\mathcal{T}}$. As $2-m<\text{br}_r(\Tcal)$  we choose $\eps\in (0,1)$ sufficiently small such that $2-m+3\eps< \text{br}_r(\Tcal)$.
In virtue of \eqref{brr.inq}, we thus have that, with positive probability,  $2-m+\eps< \text{br}_r(\widetilde{\Tcal})$. Hence, there exists a $\gamma>1$ such that $(2-m+\eps)\gamma\le \text{br}_r(\widetilde{\Tcal})$ with positive probability.  
Therefore, by the definition of the branching-ruin number,
    $$\inf_{\pi \in \Pi} \sum_{e \in \pi}\Psi(e)^{\gamma} > 0,$$ for some $\gamma>1$
    with positive probability.
  As $\gamma>1$, by the definition of $RT(\Tcal,\boldsymbol{\lambda},\boldsymbol{\mu})$, we must have that
  $$\P\big(RT(\Tcal, \boldsymbol{\lambda},\boldsymbol{\mu})>1\big)>0.$$ 
    
    Using Theorem \ref{RT.thm}, we infer that under the annealed measure $\P$, the process $\X$ is transient  with positive probability. By Lemma \eqref{event.equiv}, under the quenched measure $\P_{\boldsymbol{\lambda}, \boldsymbol{\mu}}$, the process is either a.s. transient or a.s. recurrent. This yields the same zero-one law for the annealed measure $\P$. Hence, the annealed measure $\P$, the process is a.s. transient. 
      \end{proof}
\section{Appendix}
\subsection{Gambler's ruin}\label{sec:gam}

The Gambler's Ruin problem models a gambler betting repeatedly until either losing all money (ruin) or reaching a target fortune. Specifically:
\begin{itemize}
    \item The gambler starts with $i$ units of money ($0 < i < N$), and the opponent (e.g., a casino) has $N - i$ units, with total capital fixed at $N$.
    \item In each round, the gambler bets 1 unit. If at state $i$ (fortune = $i$), they win 1 unit (move to $i+1$) with probability $p_i$ or lose 1 unit (move to $i-1$) with probability $q_i = 1 - p_i$, where $p_i$ depends on $i$. Assume that $0<p_i<1$ for $1\le i\le N-1$.
    \item The game ends when the gambler reaches state 0 (ruin, no money) or state $N$ (success, wins all capital). 
    \item States 0 and $N$ are absorbing: once reached, the gambler stays there.
\end{itemize}
Let $S_n$ be the gambler's fortune after $n$ steps.
This is a discrete-time Markov chain on states $\{0, 1, \dots, N\}$ with transition probabilities:
\[
\P(S_{n+1} = j \mid S_n = i) =
\begin{cases}
p_i & \text{if } j = i + 1, \\
q_i = 1 - p_i & \text{if } j = i - 1, \\
0 & \text{otherwise},
\end{cases}
\]
for $i = 1, \dots, N-1$, and $\P(S_{n+1} = i \mid S_n = i) = 1$ for $i = 0, N$. 

Denote 
$\P_i(\cdot)=\P(\cdot\mid S_0=i)$.
Let $\tau_k=\inf\{n\ge 0: S_n=k\}$ be the first hitting time to state $k$. Let $x_i$ denote the probability of eventual ruin (reaching 0 before $N$) starting from state $i$, i.e.,
$$x_i=\P_i(\tau_0<\tau_N).$$ 
For $1\le i \le N-1$, the ratio
$$\mu_i:=\frac{q_i}{p_i}$$
is called the \textbf{bias} (toward $0$) at state $i$.
\vspace{5pt}

\begin{proposition}\label{gambler.ruin} For $0\le i\le N$, we have 
$$x_i=1-\frac{\varphi(i)}{\varphi(N)},
$$
where 
\[
\varphi(k) := \sum_{j=1}^k \prod_{h=1}^{j-1} \mu_h, \quad k = 1, \dots, N,
\]
with  conventions $\varphi(0) = 0$.

\end{proposition}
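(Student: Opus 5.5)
The plan is the standard first-step analysis for a birth–death chain with absorbing endpoints. Set $x_i=\P_i(\tau_0<\tau_N)$. Then $x_0=1$ and $x_N=0$, because the chain is absorbed immediately at either endpoint. For $1\le i\le N-1$, conditioning on the first step and using the Markov property gives
\[
x_i=p_i x_{i+1}+q_i x_{i-1}.
\]
Since $p_i+q_i=1$, this can be rewritten as $p_i(x_{i+1}-x_i)=q_i(x_i-x_{i-1})$. Writing $d_j:=x_j-x_{j-1}$ for $1\le j\le N$, we get the first-order recursion $d_{i+1}=\mu_i d_i$. Iterating it yields
\[
d_j=d_1\prod_{h=1}^{j-1}\mu_h ,
\]
which is exactly $d_1$ times the $j$-th summand in the definition of $\varphi$.

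Next I sum the increments. This gives $x_k-x_0=d_1\varphi(k)$ for $0\le k\le N$, where the case $k=0$ holds by the convention $\varphi(0)=0$. Imposing the boundary condition $x_N=0$ gives $0=1+d_1\varphi(N)$. Because every $\mu_h>0$ (this uses $0<p_h<1$), we have $\varphi(N)\ge 1>0$, so $d_1=-1/\varphi(N)$. Substituting back gives $x_i=1-\varphi(i)/\varphi(N)$ for all $0\le i\le N$, which is the claim.

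The only point needing care is justifying that $x_i$ actually satisfies the first-step equations. This requires that absorption occurs almost surely, so that $\P_i(\tau_0<\tau_N)+\P_i(\tau_N<\tau_0)=1$ and the quantities are well defined. That follows from a standard argument: from any interior state, the run of $N$ consecutive up-steps has probability at least $\prod_h p_h>0$, and it leads to absorption at $N$. Hence the absorption time has a geometric tail and is a.s. finite.

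The first-step identity itself needs no further input, since it is the strong Markov property at time $1$. Uniqueness of the solution is not an issue here, because the recursion determines $x$ once $d_1$ is fixed, and the boundary condition fixes $d_1$. Nothing else is expected to be an obstacle.
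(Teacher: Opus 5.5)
Your proof is correct and follows the paper's approach: the paper says only that $(x_i)$ solves $x_i=q_ix_{i-1}+p_ix_{i+1}$ with $x_0=1$, $x_N=0$, and you make that explicit by solving via the increments $d_{j+1}=\mu_j d_j$. One small point: the first-step identity and the boundary values hold by the Markov property even without almost-sure absorption, so your absorption argument is harmless but not actually required.
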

\vspace{5pt}

The above result follows immediately from the fact that $(x_i)_{0\le i\le N}$ is the solution to the difference equation 
$$x_i=q_ix_{i-1}+p_ix_{i+1}, \text{ for $1\le i\le N-1$}  $$
with the boundary conditions $x_0=1$ and $x_N=0$.

\subsection{Hoeffding's inequality}

\begin{proposition}[Hoeffding's inequality]\label{Hoeffding}
Let $(X_n)_{n\ge 1}$ be independent random variables such that 
$a_n \leq X_{n}\leq b_n$ almost surely. 
Let $$S_{n}=X_{1}+\cdots +X_{n}.$$
For all $t > 0$,
\begin{align*}
   & \P \big(|S_{n}-\E[S_n]|\geq t\big)\leq 2\exp \left(-{\frac {2t^{2}}{\sum_{i=1}^n(b_i-a_i)^{2}}}\right).
\end{align*} 
\end{proposition}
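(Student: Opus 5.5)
The plan is the standard Chernoff--Hoeffding argument: bound the moment generating function of each centered summand, multiply using independence, then optimize an exponential Markov bound. Write $Y_i := X_i - \E[X_i]$. Each $Y_i$ is mean zero and lies in an interval of length $b_i - a_i$. It suffices to prove the one-sided bound
\[
\P(S_n - \E[S_n] \ge t) \le \exp\Bigl(-\tfrac{2t^2}{\sum_{i=1}^n (b_i-a_i)^2}\Bigr).
\]
Applying the same bound to the variables $-X_i$, which lie in $[-b_i,-a_i]$ (intervals of the same lengths), gives the lower tail. A union bound then produces the factor $2$.

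\textbf{Step 1 (Hoeffding's lemma).} If $Y$ has mean zero and $a \le Y \le b$ almost surely, then
\[
\E[e^{sY}] \le \exp\bigl(s^2 (b-a)^2/8\bigr) \quad \text{for all } s \in \mathbb{R}.
\]
\begin{itemize}
\item By convexity of $y \mapsto e^{sy}$ on $[a,b]$,
\[
e^{sY} \le \frac{b-Y}{b-a} e^{sa} + \frac{Y-a}{b-a} e^{sb}.
\]
Taking expectations and using $\E[Y]=0$ gives
\[
\E[e^{sY}] \le \frac{b}{b-a} e^{sa} - \frac{a}{b-a} e^{sb} = e^{L(h)}.
\]
Here $h = s(b-a)$, $p = -a/(b-a) \in [0,1]$, and $L(h) = -hp + \log(1-p+pe^h)$.
\item Check $L(0) = L'(0) = 0$ and $L''(h) = \theta(1-\theta) \le 1/4$, where $\theta = pe^h/(1-p+pe^h)$.
\item Taylor's theorem then gives $L(h) \le h^2/8$.
\end{itemize}
This is the main step requiring care: verifying the second-derivative bound and handling the degenerate case $a = b$, which is trivial.

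\textbf{Step 2 (Chernoff bound).} For $s > 0$, Markov's inequality and independence give
\[
\P(S_n - \E[S_n] \ge t) \le e^{-st} \prod_{i=1}^n \E[e^{sY_i}] \le \exp\Bigl(-st + \tfrac{s^2}{8} \sum_{i=1}^n (b_i-a_i)^2\Bigr).
\]

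\textbf{Step 3 (optimization).} Choose $s = 4t / \sum_{i=1}^n (b_i-a_i)^2$. The exponent becomes $-2t^2 / \sum_{i=1}^n (b_i-a_i)^2$, which is the one-sided bound. The symmetrization and union bound described at the start complete the proof.
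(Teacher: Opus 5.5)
Your proof is correct. The paper gives no argument of its own and simply cites Theorem 2.16 of \cite{BDR2015}; your Hoeffding-lemma-plus-Chernoff derivation is the standard proof of that result. The one case you skip is $\sum_{i=1}^n (b_i-a_i)^2=0$, where your choice $s=4t/\sum_{i=1}^n(b_i-a_i)^2$ is undefined. It is trivial: every $X_i$ is then a.s. constant, so the left-hand side vanishes and the bound holds.
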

\vspace{5pt}

See, e.g., Theorem 2.16 in \cite{BDR2015}.

\section*{Acknowledgment} 
The authors would like to thank Andrea Collevecchio and Maximilian Nitzschner for their valuable comments and feedback, which helped improve the quality of the manuscript.
\bibliography{refs}  
\bibliographystyle{amsplain}
\end{document}